\newtheorem{theorem}{Theorem}[section]
\newtheorem{lemma}[theorem]{Lemma}
\newtheorem{proposition}[theorem]{Proposition}
\newtheorem{corollary}[theorem]{Corollary}
\newtheorem*{conjecture}{Conjecture}
\newcommand\K{{\mathbb K}}
\newcommand\N{{\mathbb N}}
\begin{document}

\title[On a conjecture of Goodearl]{\boldmath On a conjecture of Goodearl:
  Jacobson radical non-nil algebras of Gelfand-Kirillov dimension $2$}

\author{Agata Smoktunowicz}
\thanks{The research of the first author was supported by Grant No. EPSRC EP/D071674/1.}
\address{\begin{minipage}{0.8\linewidth}
\noindent Agata Smoktunowicz: \\
Maxwell Institute for Mathematical Sciences\\
School of Mathematics, University of Edinburgh,\\
James Clerk Maxwell Building, King's Buildings, Mayfield Road,\\
Edinburgh EH9 3JZ, Scotland, UK\\
\end{minipage}}
\email{A.Smoktunowicz@ed.ac.uk}

\author{Laurent Bartholdi}
\address{\begin{minipage}{0.8\linewidth}
\noindent Laurent Bartholdi:\\
Mathematisches Institut\\
Georg August-Universit\"at zu G\"ottingen\\
Bunsenstra\ss e 3--5\\
D-37073 G\"ottingen\\
Germany\\
\end{minipage}}
\email{laurent.bartholdi@gmail.com}

\date{12 February 2011}

\begin{abstract}
  For an arbitrary countable field, we construct an associative
  algebra that is graded, generated by finitely many degree-$1$
  elements, is Jacobson radical, is not nil, is prime, is not PI, and
  has Gelfand-Kirillov dimension two. This refutes a conjecture
  attributed to Goodearl.
\end{abstract}

\subjclass[2010]{16N40, 16P90}

\keywords{Goodearl conjecture, Nil algebras, the Jacobson radical,
  growth of algebras, Gelfand-Kirillov dimension}

\maketitle


\section{Introduction}
Consider an algebra $R$ over a field $\K$, generated by a
finite-dimensional subspace $V$. The \emph{Gelfand-Kirillov
  dimension}, or \emph{GK-dimension}, of $R$ is the infimal $d$ such
that $\dim(V+V^2+\cdots+V^n)$ grows slower than $n^d$ as
$n\to\infty$. For example, $\K[t_1,\dots,t_d]$ has GK-dimension
$d$. Which constraints does an associative algebra of finite
Gelfand-Kirillov dimension have to obey?  For example, if $R$ is a
group ring, then the group has polynomial growth, so is virtually
nilpotent by Gromov's celebrated theorem~\cite{gromov:nilpotent}, so
$R$ is noetherian. For elementary properties of the Gelfand-Kirillov
dimension, see~\cite{krause-l:gkdim}.

However, various flexible constructions have produced quite exotic
examples of finitely generated associative algebras (\emph{affine
  algebras} in the sequel) of finite
GK-dimension~\cite{bell:examples}, and it has been hoped at least that
algebras of GK-dimension $2$ would enjoy some sort of classification
--- algebras of GK-dimension $<2$ are well understood, and are
essentially polynomials in at most one variable, by Bergman's gap
theorem~\cite{bergman:growth}, and graded domains of GK-dimension $2$
are essentially twisted co\"ordinate rings of projective
curves~\cite{artin-s:gradedquadratic}.

An element $x$ in a ring $R$ is \emph{quasi-regular} if there exists
$y\in R$ with $x+y+xy=0$. This happens, for instance, if $x$ is
nilpotent (take $y=-x+x^2-x^3+\cdots$). Conversely, if $R$ is graded,
then homogeneous quasi-regular elements are nilpotent. The
\emph{Jacobson radical} $J(R)$ of $R$ is the largest ideal all of
whose elements are quasi-regular. A ring is \emph{radical} if it is
equal to its Jacobson radical; note then, in particular, that it may
not contain a unit (in fact, not even a non-trivial idempotent:
$x^2=x,-x+y-xy=0\Rightarrow -x^2+xy-x^2y=-x^2=-x=0$).

A typical result showing the connection between nillity and the
structure of the Jacobson radical is: $R$ is artinian, then $J(R)$ is
nilpotent. The following structural result was expected:
\begin{conjecture}[Goodearl,~\cite{bell:examples}*{Conjecture~3.1}]
  If $R$ is an affine algebra of GK-dimension $2$, then its Jacobson
  radical $J(R)$ is nil.
\end{conjecture}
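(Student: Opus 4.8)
The plan is to refute the conjecture by constructing an explicit counterexample. I would produce $R$ as a quotient $F/I$ of a free algebra $F=\K\langle x,y\rangle$ (graded by $\deg x=\deg y=1$) by a homogeneous ideal $I=\bigoplus_{n\ge1}I_n$, defining $I$ one graded piece at a time. The hypothesis that $\K$ is \emph{countable} is decisive here: $F$ is then countable, so I can fix enumerations of all the competing requirements --- a demand ``make this element quasi-regular'' for each element of $F$, a demand ``keep this product nonzero'' for each pair of homogeneous elements of $F$, a growth schedule, and one distinguished element that must stay non-nilpotent --- and satisfy them one at a time as $I$ is extended to higher degrees.

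First I would reformulate the target properties. Since $R=\bigoplus_{n\ge1}R_n$ is graded with $R_0=0$, the ideal $J(R)$ is graded, and as $R$ is an ideal of itself, $R$ is Jacobson radical iff \emph{every element of $R$ is quasi-regular}; for $r\in R$ the only possible quasi-inverse is the series $-r+r^2-r^3+\cdots$ formed in the completion $\widehat R=\prod_nR_n$, so the requirement is that for every $r$ this series has only finitely many nonzero homogeneous components. In particular every homogeneous element of $R$ is then nilpotent, so $R$ can fail to be nil only through an \emph{inhomogeneous} element; I would fix in advance an element $a$, inhomogeneous, all of whose homogeneous components are (forced to be) nilpotent, and arrange $a^n\ne0$ for all $n$, so that the surviving part of $a^n$ migrates into intermediate degrees as the extreme products die. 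For the GK-dimension I would keep $\dim R_n$ trapped between two positive multiples of $n$; the partial sums $\dim(V+\cdots+V^n)$ then grow quadratically, so the GK-dimension is exactly $2$ (in harmony with Bergman's gap theorem, which already forbids anything strictly between $1$ and $2$). Primeness is secured by ensuring that $uFv$ has nonzero image in $R$ for all homogeneous $u,v\in F$ of nonzero image. Non-PI I would leave for the end: an affine Jacobson-radical PI-algebra has nilpotent, hence nil, radical by Braun's theorem, so an affine Jacobson-radical algebra that is not nil cannot be PI.

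The construction itself is an induction on $n$ producing $I_1,\dots,I_n$ with $F_1I_m+I_mF_1\subseteq I_{m+1}$ for $m<n$. At stage $n$ the subspace $W_{n+1}:=F_1I_n+I_nF_1$ of $F_{n+1}$ is already committed to $I_{n+1}$, and I must choose $I_{n+1}\supseteq W_{n+1}$ so that simultaneously: (i) $\dim(F_{n+1}/I_{n+1})$ stays on the near-linear schedule; (ii) no power of $a$ lying in degrees $\le n+1$, and no primeness witness currently in play, is killed; (iii) for the quasi-regularity demand $f$ under attack, enough relations have by now accrued in degrees $\le n+1$ that the degree-$(n+1)$ component of the quasi-inverse of the image of $f$ vanishes, with the schedule ensuring that the higher components then vanish as well. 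The construction must thus impose \emph{many} relations --- enough to cut $\dim F_{n+1}=2^{n+1}$ down to order $n$ and to force all quasi-inverses to terminate --- while imposing \emph{few} --- few enough to keep $R$ prime and to keep $a$ and its powers alive.

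The step I expect to be the main obstacle is precisely this reconciliation, and concretely the organisation of demand (iii). One has to show that the relations obligatory for quasi-regularity --- infinitely many demands across infinitely many degrees --- can be laid down so that in any single degree they swallow only a sub-linear slice of $F_{n+1}$, leaving the ``must-survive'' subspaces of (ii) room inside the linear budget of (i). I would attack this with a quantitative combinatorial lemma establishing that the non-nilpotent elements of $R$ are, in a suitable sense, few and all traceable to the single element $a$, so that termination of a quasi-inverse $-f+f^2-\cdots$ is governed, for each fixed $f$, by only boundedly many relations, and the whole countable list of quasi-regularity demands can be interleaved with the growth, primeness and non-nillity demands without collision. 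Proving that lemma --- that ``forced to survive'' is sparse while ``forced to die'' can be realised sparsely, at once --- is the heart of the matter; granted it, the induction closes and $R=F/I$ is graded, affine, prime, not PI, Jacobson radical, not nil, and of GK-dimension $2$, refuting the conjecture.
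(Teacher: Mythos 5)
Your proposal takes a genuinely different route from the paper, and, as you yourself flag, it leaves its pivotal lemma unproved; I think the gap is more serious than you let on. The paper does \emph{not} build a single graded ring that is simultaneously radical and non-nil by interleaving all requirements. Instead it first constructs a radical $P$ (graded, affine, GK-dimension $2$) with the property that the polynomial ring $P[X]$ is \emph{not} radical (Theorems~\ref{thm:main} and~\ref{thm:polynotrad}); the non-nilpotent witness is $x+Xy\in P[X]$, which, once $X$ is assigned degree $0$, is a \emph{homogeneous} element of degree $1$. This is the decisive simplification: ``$(x+Xy)^{2^n}\ne0$'' translates into a purely graded condition --- at least one of the homogeneous polynomials $w(2^n,i)$ must survive modulo $U(2^n)$ --- which slots cleanly into the stage-by-stage graded construction (Proposition~\ref{prop:main}). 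The passage to a bona fide radical non-nil algebra then uses soft structure theory: Proposition~\ref{prop:nonnil} shows that some matrix ring $M_n(P)$ is non-nil (via a graded-nil theorem), and Lemma~\ref{lem:prime} passes to a prime quotient; Bergman's gap and Braun's theorem pin down the GK-dimension and non-PI at the end. Crucially, $P$ itself is not required to be prime or non-nil (indeed the paper admits not knowing whether $P$ is nil).

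Your plan instead demands a designated \emph{inhomogeneous} $a\in R$ that is simultaneously quasi-regular (Neumann series $-a+a^2-\cdots$ terminates in $R$) and non-nilpotent ($a^n\ne0$ for all $n$), while the ideal $I$ is built degree by degree. This is a much stronger coupling than you acknowledge. In each degree $k$ the Neumann series is a fixed linear combination of the length-$n$ words in the homogeneous components of $a$, and the extreme-degree words (all lowest-degree or all highest-degree components) sit alone in their degrees with nothing to cancel against, so forcing termination degree-by-degree exerts heavy pressure toward killing exactly the products you need to keep $a^n$ alive; in the simplest test case $a=a_1+a_2$ this pressure literally forces $a$ nilpotent. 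So the ``sparse versus sparse'' lemma you defer is not merely bookkeeping --- it is where a genuinely new idea is needed, and your proposal gives no indication of what that idea would be. The paper's detour through $P[X]$ and $M_n(P)$ is precisely what converts this intractable inhomogeneous interleaving into a homogeneous condition on $P$ plus abstract reductions, and I do not see how to reach the conclusion along your direct route without rediscovering an equivalent indirection.
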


We disprove this conjecture, by constructing for every countable field
$\K$ an algebra $R$ over $\K$, which is
\begin{itemize}
\item graded by the natural numbers;
\item generated by finitely many degree-$1$ elements;
\item prime;
\item of Gelfand-Kirillov dimension $2$;
\item equal to its Jacobson radical;
\item not PI (i.e.\ does not satisfy a polynomial identity);
\item not nil.
\end{itemize}

Our strategy is to adapt a construction of the first author,
see~\cite{smoktunowicz:jralgebras2}, by showing that it may yield
non-nil algebras.  Some tools are also borrowed from the second
author's paper~\cite{bartholdi:branchalgebras}; however, the
construction given there is not correct, and indeed not not yield a
radical algebra. One of the goals of this paper is therefore to give a
correct solution to the problem raised by Goodearl.

\section{The construction}
We begin by constructing the following algebra $P$; the proof of this
theorem will be split over the next three sections.

\begin{theorem}\label{thm:main}
  Over every countable field $\K$ of characteristic zero, there exists
  a radical algebra $P$, such that the polynomial ring $P[X]$ is not
  radical.

  Moreover, $P$ may be chosen to have Gelfand-Kirillov dimension two,
  be $\N$-graded and generated by two elements of degree one.
\end{theorem}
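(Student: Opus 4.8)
The plan is to realize $P$ as the quotient $A^+/I$ of the augmentation ideal $A^+=\bigoplus_{n\ge1}A_n$ of the free algebra $A=\K\langle x,y\rangle$ (graded by total degree) by a homogeneous ideal $I\subseteq A^+$ to be constructed recursively. Then $P$ is automatically $\N$-graded and generated by the two degree-one elements $\bar x,\bar y$, so the substantive tasks are three: make $P$ radical, control $\dim_\K P_n$ so that $\operatorname{GK-dim}P=2$, and keep $P$ non-nil. The connection of the last property with the statement is elementary: if $a\in P$ is not nilpotent, then $aX\in P[X]$ has no quasi-inverse, for the only formal candidate $\sum_{j\ge1}(-a)^jX^j$ has all coefficients $(-a)^j$ nonzero and so is not a polynomial; hence $aX\notin J(P[X])$ and $P[X]\ne J(P[X])$. (In fact $P[X]$ is radical if and only if $P$ is nil, by Amitsur's description of $J(R[X])$, but only the easy direction is needed.)

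First I would fix the non-nil datum: choose an inhomogeneous $a\in A^+$, say $a=x+yxy$, and for each $n\ge1$ single out one homogeneous component $c_n$ of $a^n$ — for concreteness the degree-$(n+2)$ component $\sum_{0\le i<n}x^i(yxy)x^{n-1-i}$. The construction will preserve the invariant that no $c_n$ lies in $I$. This is consistent with $P$ being radical: $c_n$ is a homogeneous element of $P$, and radicality only forces it to be \emph{nilpotent}, not zero. Since one nonzero homogeneous component of $a^n$ already forces $a^n\ne 0$ in $P$, the invariant immediately gives that $P$ is not nil.

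The ideal $I$ is then built as the union of an ascending chain $I_0\subseteq I_1\subseteq\cdots$ of finitely generated homogeneous ideals, defined by recursion over a fixed enumeration $g_1,g_2,\dots$ of $A^+$ — here countability of $\K$ is used, and the characteristic-zero hypothesis enters the finer dimension bookkeeping. At stage $k$ one adjoins to $I_{k-1}$ finitely many homogeneous elements, pushing the image of $g_k$ in the eventual $P$ toward quasi-regularity; the ultimate aim is that every element of $P$ (being some $\bar g_k$) be quasi-regular, i.e.\ that $P$ be radical. For homogeneous $g_k$ this means forcing it to be nilpotent; for general $g_k$ one must, more delicately, ensure that its formal quasi-inverse $-g_k+g_k^2-\cdots$, and the nested products into which it factors via $1+g=(1+g_{\min})\bigl(1+(1+g_{\min})^{-1}g'\bigr)$ (peeling off the bottom-degree component $g_{\min}$), eventually collapse to genuine elements of $P$. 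Every relation adjoined can only decrease the $\dim_\K P_n$, and the choices are made so as, in addition, (a) never to draw any $c_n$ into the ideal generated by $I_{k-1}$ together with the new relations, and (b) to keep $\dim_\K P_n$ of size $\asymp n$ for all large $n$: enough relations that $\dim_\K P_n=O(n)$, whence $\operatorname{GK-dim}P\le2$, but not so many that the Bergman gap theorem would force $\operatorname{GK-dim}P\le1$; so $\operatorname{GK-dim}P=2$.

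Granting that the recursion can be carried out, $P=A^+/I$ is radical, is $\N$-graded and $2$-generated in degree one, has GK-dimension $2$, and is not nil, so $P[X]$ is not radical and the theorem follows. The main obstacle is precisely the compatibility, at every single stage, of the quasi-regularity demand with (a) and (b): the relations that tame the next element $g_k$ must be generic enough to shrink each $A_n$ to linear size and to collapse the infinite products arising from radicality, sparse enough to leave untouched the one-dimensional ``reservoir'' spanned by the $c_n$, and plentiful enough that the algebra does not fall below GK-dimension $2$. Choosing the enumeration, and the degree windows in which relations are introduced, so that these competing requirements never collide, is the delicate combinatorial heart of the argument, adapted from the construction of~\cite{smoktunowicz:jralgebras2}.
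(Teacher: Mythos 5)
Your high-level plan points in the right direction but stops precisely where the paper's work begins. The substance of Theorem~\ref{thm:main} is \emph{not} the reduction ``non-nil $P\Rightarrow P[X]$ not radical'' (which is indeed immediate: in $P[X]$ the only candidate quasi-inverse for $aX$ is the power series $\sum_{j\ge1}(-a)^jX^j$, and that is a polynomial only if $a$ is nilpotent). The substance is the compatibility you defer with the clause ``Granting that the recursion can be carried out.'' The paper's Sections~3--5 --- in particular Lemmas~\ref{lem:case2} and~\ref{lem:extend}, Proposition~\ref{prop:main}, and Theorem~\ref{thm:polynotrad} --- are exactly what replaces that clause, and nothing in your proposal engages with them. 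As written this is a sketch of a strategy, not a proof of the theorem.

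There is also a tell-tale sign that your specific plan is harder than what the paper proves, and is possibly infeasible. The paper never establishes that its $P$ is non-nil; it says explicitly in the final section ``We do not know if $P$ is nil.'' Instead it produces a \emph{different} witness: $x+Xy\in P[X]$, which becomes \emph{homogeneous} of degree $1$ once $X$ is given degree $0$. The family of homogeneous coefficients $w(2^n,i)$ of $(x+Xy)^{2^n}$ obeys a symmetric convolution identity $w(2^{n+1},p)=\sum_k w(2^n,p/2+k)w(2^n,p/2-k)$, and this is precisely what Lemmas~\ref{lem:case2} and~\ref{lem:extend} exploit to keep the $2$-dimensional margin of Proposition~\ref{prop:main}(9) alive through every stage of the $U(2^n)/V(2^n)$ construction. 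Your witness $a=x+yxy$ is inhomogeneous, and the specific homogeneous components $c_n=\sum_i x^i(yxy)x^{n-1-i}$ you want to protect have no such recursion; you give no mechanism by which the radical-forcing relations (which must shrink $A(2^n)$ down by an exponential factor) can be chosen so as never to capture any single $c_n$. The paper only ever asks that the whole space $W(2^n)$ retain dimension $\ge 2$ modulo $U(2^n)$, a much more robust ``collective'' condition than the survival of a designated vector. Protecting a designated inhomogeneous element is precisely the question the paper leaves open, so if your recursion worked it would settle a point the authors were unable to settle.

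One smaller error: the parenthetical ``$P[X]$ is radical if and only if $P$ is nil, by Amitsur's description of $J(R[X])$'' is wrong as stated. Amitsur gives $J(R[X])=N[X]$ with $N=J(R[X])\cap R$ a nil ideal of $R$, which yields only the implication ``$P[X]$ radical $\Rightarrow P$ nil.'' The converse, ``$P$ nil $\Rightarrow P[X]$ radical,'' is not a theorem: by Krempa's work it is \emph{equivalent} to the K\"othe conjecture. You correctly note you only need the easy direction, but the biconditional should not be asserted.

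Finally, two bookkeeping remarks. The paper works in the free algebra on three generators $x,y,z$ with $z$ deliberately placed in $\mathscr E(1)$, so that $z=0$ in $P$; you work in $\K\langle x,y\rangle$, which is fine but is a genuine deviation from the source construction in~\cite{smoktunowicz:jralgebras2}. And the growth of the paper's $P$ is not $\dim_\K P_n\asymp n$: by~\cite{smoktunowicz:jralgebras2}*{Theorem~20} one only gets $\dim_\K(P(1)+\cdots+P(n))\le Cn^2\log(n)^3$, with the $V(2^n)$'s spiking to dimension $2^{2^j}$ on the runs inside $S$; your claim of linear $\dim_\K P_n$ would be strictly stronger and would need its own argument.
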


\noindent We then show that a sufficiently large ring of matrices over
such a $P$ is not nil:
\begin{proposition}\label{prop:nonnil}
  Let $P$ be a radical algebra such that the polynomial ring $P[X]$ is
  not radical. Then there is a natural number $n$ such that the
  algebra $M_n(P)$ of $n$ by $n$ matrices over $P$ is not nil.
\end{proposition}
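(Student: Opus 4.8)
The plan is to convert the failure of $P[X]$ to be radical into a concrete non-nilpotent matrix over $P$. Because $P[X]$ is not radical, and $J(P[X])$ is the largest left ideal of $P[X]$ all of whose elements are left quasi-regular, some polynomial $f=a_0+a_1X+\cdots+a_mX^m\in P[X]$ is not left quasi-regular; that is, $1+f$ has no left inverse in $\widehat P[X]$, where $\widehat P=\K\oplus P$. Note $m\ge1$, since $P=J(P)$ forbids non-quasi-regular elements of $P$ itself. Now $1+a_0$ is a unit in $\widehat P$ (as $a_0$ lies in the radical $P$), so $1+f$ is a unit in the power series ring $\widehat P[[X]]$; I would compute its inverse $\sum_{k\ge0}d_kX^k$ by comparing coefficients of $X^k$ in $(1+g)(1+f)=1$, obtaining $d_0=(1+a_0)^{-1}$ and the order-$m$ recursion $d_k=\sum_{i=1}^{m}d_{k-i}c_i$ for $k\ge1$, where $c_i:=-a_i(1+a_0)^{-1}$ and $d_j:=0$ for $j<0$. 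Two facts are then crucial: each $c_i$ lies in $P$, because $P$ is an ideal of $\widehat P$, hence so does each $d_k$ with $k\ge1$; and "$f$ is not left quasi-regular" says exactly that this inverse power series is not a polynomial, i.e.\ $d_k\ne0$ for infinitely many $k$.

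Next I would package the recursion into a matrix over $P$. Group the sequence into blocks $D_j:=(d_{jm},d_{jm+1},\dots,d_{jm+m-1})$, regarded as row vectors over $\widehat P$. Expressing each coordinate of $D_{j+1}$ in terms of $D_j$ by repeated use of the recursion and back-substitution of the coordinates already computed, and noting that every coefficient that arises is a product of $c_i$'s and so lies in $P$, one obtains a single matrix $E\in M_m(P)$ — the same for all $j$, since the recursion has constant coefficients — with $D_{j+1}=D_jE$, hence $D_j=D_0E^{\,j}$ for all $j\ge0$. Finally, no block $D_j$ with $j\ge1$ vanishes, for a run of $m$ consecutive zeros among the $d_k$ would, by the recursion, propagate forward and force $d_k=0$ for all large $k$, contradicting $d_k\ne0$ for infinitely many $k$. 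Therefore $D_0E^{\,j}=D_j\ne0$, and so $E^{\,j}\ne0$, for every $j\ge1$; thus $E$ is a non-nilpotent element of $M_m(P)$, and the proposition holds with $n=\deg f$.

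The point requiring care — and the reason a matrix enlargement is genuinely needed — is that one must exhibit a non-nilpotent element, not merely a non-quasi-regular one: the latter would be vacuous, since $M_n(P)$ is automatically radical, $J(M_n(P))=M_n(J(P))=M_n(P)$. For the same reason one cannot shortcut via Amitsur's theorem to conclude "$P$ is not nil, so take $n=1$": whether a nil algebra can have a non-radical polynomial ring is exactly K\"othe's problem, so the hypothesis alone does not make $P$ non-nil. The substance of the argument is thus the bookkeeping of the second paragraph, which keeps every matrix entry inside the non-unital algebra $P$ while still detecting non-nilpotence through the non-terminating inverse power series; blocking the recursion into chunks of length $m$ rather than using the naive $m\times m$ companion matrix, which does not lie in $M_m(P)$, is precisely what makes this work.
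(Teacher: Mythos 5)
Your proof is correct, but it takes a genuinely different route from the paper's. The paper argues by contradiction and outsources its substance to a cited theorem of Smoktunowicz on graded nil rings: assuming every $M_n(P)$ is nil, it treats $\mathscr I=XP[X]$ as an $\N$-graded ring, invokes the cited result to conclude that $\mathscr I$ is Jacobson radical, and then uses the standard fact that an extension of a radical ring by a radical ring is radical (here $\mathscr I$ and $P[X]/\mathscr I\cong P$) to deduce $P[X]$ is radical, a contradiction. You instead argue directly and constructively: you locate a non-left-quasi-regular $f\in P[X]$, expand the power-series inverse of $1+f$ in $\widehat P[[X]]$, observe that its positive-degree coefficients $d_k$ lie in $P$ and are infinitely often nonzero, and block the linear recursion into windows of length $m=\deg f$ so that the transfer matrix $E$ — in effect the $m$-th power of the companion matrix, which eliminates its unit off-diagonal entries because $N^m=0$ and every surviving term contains a column of $c_i$'s — lies in $M_m(P)$ and satisfies $E^j\ne0$ for all $j\ge1$, since a vanishing window of $m$ consecutive coefficients would propagate forward. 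Your approach trades the paper's brevity and reliance on an external structural theorem for being self-contained and explicit: it exhibits a concrete non-nilpotent matrix and gives the explicit value $n=\deg f$. Your closing remarks also correctly pinpoint the two traps — the raw companion matrix does not lie over $P$, which is exactly why blocking is needed, and Amitsur's theorem cannot be used to shortcut to $n=1$, since the hypothesis alone does not preclude $P$ being nil.
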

\begin{proof}
  Suppose that $P$ is radical and that, for every $n\in\N$, the ring
  $M_n(P)$ is nil. Write $R=P[X]$ and $\mathscr I=XR$; we will deduce
  that $R$ is radical. Observe that $M_n(XP)$ is nil for all $n\in\N$,
  and $\mathscr I=XP+(XP)^2+\cdots$; therefore,
  by~\cite{smoktunowicz:jrgradednil}*{Theorem~1.2}, the ring $\mathscr
  I$ is radical. Notice then that $\mathscr I$ is an ideal in $R$, and
  $R/\mathscr I=P$ is radical. Now, if both $\mathscr I$ and
  $R/\mathscr I$ are radical, then so is $R$.
\end{proof}

\begin{lemma}\label{lem:prime}
  Let $R$ be a non-nil ring. Then there exists a quotient $R/\mathscr
  I$ that is non-nil and prime. If $R$ is graded, then $R/\mathscr I$
  may also be taken to be graded.
\end{lemma}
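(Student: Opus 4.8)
The plan is to use a standard Zorn's lemma maximality argument to produce the ideal $\mathscr I$. First I would fix an element $a \in R$ that is not nilpotent; such an $a$ exists precisely because $R$ is non-nil. Consider the family $\mathscr F$ of all ideals $\mathscr I$ of $R$ such that $a^n \notin \mathscr I$ for every $n \in \N$; equivalently, $\mathscr I$ does not meet the multiplicative set $\{a, a^2, a^3, \dots\}$. This family is nonempty since $(0) \in \mathscr F$ (here I use that $a$ is not nilpotent), and it is closed under unions of chains, so by Zorn's lemma it has a maximal element $\mathscr I$. I claim $R/\mathscr I$ is the required quotient.

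It is non-nil: the image $\bar a$ of $a$ in $R/\mathscr I$ satisfies $\bar a^n \neq 0$ for all $n$, since $a^n \notin \mathscr I$. The main point is primeness. Suppose $\mathscr A, \mathscr B$ are ideals of $R$ strictly containing $\mathscr I$ with $\mathscr A \mathscr B \subseteq \mathscr I$. By maximality of $\mathscr I$ in $\mathscr F$, neither $\mathscr A$ nor $\mathscr B$ lies in $\mathscr F$, so there are exponents $m, k$ with $a^m \in \mathscr A$ and $a^k \in \mathscr B$. Then $a^{m+k} \in \mathscr A \mathscr B \subseteq \mathscr I$, contradicting $\mathscr I \in \mathscr F$. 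Hence $R/\mathscr I$ is prime. (One should phrase primeness in terms of ideals, since $R$ need not have a unit; the argument above is exactly the ideal-theoretic version.)

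For the graded refinement, suppose $R = \bigoplus_n R_n$ is $\N$-graded. The element $a$ may no longer be taken homogeneous, but one can instead start from a homogeneous component: since $R$ is non-nil, I would argue that some homogeneous element is non-nilpotent — if every homogeneous element were nilpotent then, by a standard argument (the sum of nilpotent homogeneous elements generating a nil ideal, or directly that the ideal generated by the homogeneous nilpotents is nil and all of $R$), $R$ itself would be nil, a contradiction; more carefully, one invokes that in a graded ring the lower nilradical is graded and equals $R$ if all homogeneous elements are nilpotent. So fix a homogeneous non-nilpotent $a$, and now run the same Zorn argument but inside the lattice of \emph{graded} ideals avoiding $\{a^n\}$. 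The maximal such graded ideal $\mathscr I$ gives a graded quotient, and the primeness argument goes through verbatim using that $\mathscr A, \mathscr B$ may be taken graded. The one step requiring a little care is the claim that not-all-homogeneous-nilpotent follows from non-nil; this is the only place where the graded structure does real work, and it is where I would expect to spend the most effort making the write-up clean.
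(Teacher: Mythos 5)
Your argument for the ungraded case is exactly the paper's (Zorn on ideals avoiding the multiplicative set $\{a^n\}$, maximality forcing primeness), so that part is fine.

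The graded refinement, however, has a genuine gap, and it is precisely at the step you yourself flag as delicate. You want to begin with a \emph{homogeneous} non-nilpotent element $a$, and you justify its existence by claiming that if every homogeneous element of a graded ring is nilpotent then the ring is nil. That implication is false in general, and --- worse --- it fails in exactly the situation where this lemma is applied in the paper. The lemma is invoked for $R_0=M_n(P)$ with $P$ a graded Jacobson-radical algebra; then $R_0$ is again graded and radical, and in a graded ring every homogeneous element of positive degree lying in the Jacobson radical is nilpotent (write $x+y+xy=0$ and compare homogeneous components). So \emph{every} homogeneous element of $R_0$ is nilpotent, yet $R_0$ is not nil --- producing such examples is the whole point of the paper. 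Thus there is no homogeneous $a$ to start from, and your Zorn argument over graded ideals never gets off the ground. The paper instead keeps $a$ as an arbitrary (in general non-homogeneous) non-nilpotent element, takes $\mathscr I$ maximal among \emph{homogeneous} ideals disjoint from $\{a^n\}$, and then proves primeness of the graded ideal $\mathscr I$ directly: if $pRq\subseteq\mathscr I$ with $p,q\notin\mathscr I$, one passes to the lowest-degree homogeneous components $p_i,q_j$ not in $\mathscr I$, observes by comparing degrees that $p_irq_j\in\mathscr I$ for all $r$, and then applies maximality to the homogeneous ideals generated by $p_i$ (resp.\ $q_j$) together with $\mathscr I$. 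That component-wise step is the essential idea you are missing; it lets one run the maximality argument without ever needing a homogeneous non-nilpotent element.
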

\begin{proof}
  Let $a\in R$ be non-nilpotent. Let $\mathscr I$ be a maximal ideal
  in $R$ subject to being disjoint with $\{a^n\colon
  n=1,2,\dots\}$. Then $R/\mathscr I$ is still not nil. Consider
  ideals $\mathscr P,\mathscr Q\supsetneqq\mathscr I$ with $\mathscr
  P\mathscr Q\subseteq\mathscr I$. By maximality of $\mathscr I$, we
  have $a^n\in\mathscr P$ and $a^m\in\mathscr Q$ for some $m,n\in\N$;
  but then $a^{m+n}\in\mathscr I$, a contradiction. Therefore,
  $R/\mathscr I$ is prime.

  If $R$ is graded, let $\mathscr I$ be a maximal \emph{homogeneous}
  ideal subject to being disjoint with $\{a^n\colon n=1,2,\dots\}$.
  We claim that $\mathscr I$ is a prime ideal in $R$. Suppose the
  contrary; then there are elements $p,q \notin\mathscr I$ such that
  such that $prq\in \mathscr I$ for all $r\in R$. Write
  $p=p_1+\dots+p_d$ and $q=q_1+\dots+q_e$ in homogeneous components,
  and let $p_i$ and $q_j$ denote those summands, for minimal $i,j$,
  that do not belong to $\mathscr I$.

  By assumption, $prq\in\mathscr I$ for all homogeneous $r\in R$ (say
  of degree $k$); so, by considering the component of degree $i+k+j$
  of $prq$, we see that $p_irq_j$ belongs to $\mathscr I$ for all
  homogeneous $r\in R$ (because $\mathscr I$ is graded), whence
  $p_irq_j\in\mathscr I$ for all $r\in R$.

  Let now $\mathscr P$ be the ideal generated by $p_i$ and $\mathscr
  I$; and, similarly, let $\mathscr Q$ be the ideal generated by $q_j$
  and $\mathscr I$. Then, by maximality of $\mathscr I$, we have
  $a^n\in\mathscr P$ and $a^m\in\mathscr Q$ for some $m,n\in\N$; but
  then $a^{m+n}\in\mathscr P\mathscr Q\subseteq\mathscr I$, a
  contradiction. Therefore, $R/\mathscr I$ is prime.
\end{proof}

\noindent Combining these results, we deduce:
\begin{corollary}\label{cor:main}
  Over any countable field $\K$, there exists a non-nil non-PI radical
  prime algebra $R$, of Gelfand-Kirillov dimension two, $\N$-graded
  and generated by finitely many elements of degree one.
\end{corollary}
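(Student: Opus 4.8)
The plan is to assemble the three preceding results, together with a reduction from characteristic zero to arbitrary characteristic. For a countable field $\K$ of characteristic zero, Theorem~\ref{thm:main} furnishes a radical algebra $P$, $\N$-graded and generated by two degree-one elements, of GK-dimension two, such that $P[X]$ is not radical. By Proposition~\ref{prop:nonnil}, there is an $n$ with $M_n(P)$ not nil. Now $M_n(P)$ inherits an $\N$-grading from $P$ (place all matrix units in degree zero, so the degree of a matrix is read off entrywise), it is generated in degrees zero and one by the matrix units together with the two generators of $P$ placed in a single entry, and its GK-dimension equals that of $P$, hence is two. Being radical is a Morita-type property: $M_n(P)$ is again a radical ring (the Jacobson radical commutes with passage to matrix rings, and $J(M_n(P)) = M_n(J(P)) = M_n(P)$). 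Finally, $M_n(P)$ is not PI, since a PI matrix ring over $P$ would force $P$ itself to be PI, and an $\N$-graded PI algebra of GK-dimension two generated in degree one is too tame to have non-radical polynomial extension --- alternatively, one observes directly that a non-nil PI algebra has a nonzero idempotent-free quotient obstruction; in any case PI-ness of $M_n(P)$ contradicts the construction. So set $R_0 = M_n(P)$: it is non-nil, non-PI, radical, $\N$-graded, generated in degrees $\le 1$, of GK-dimension two.

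Next I would apply Lemma~\ref{lem:prime} to $R_0$: since $R_0$ is non-nil and graded, there is a graded quotient $R = R_0/\mathscr I$ that is non-nil and prime. Passing to a quotient cannot increase GK-dimension, and since $R$ is still non-nil it is not finite-dimensional, so one must check GK-dimension did not drop below two; this follows because any graded quotient of GK-dimension $<2$ is, by Bergman's gap theorem, commutative-like and in particular would be nil in the radical case --- more carefully, a radical graded algebra of GK-dimension $<2$ is nil, contradicting non-nillity, so $\operatorname{GKdim}(R)=2$. Quotients of radical rings are radical, so $R$ is radical; and a prime non-nil ring is not PI once its non-PI parent's obstruction survives --- here primeness plus non-nillity plus GK-dimension two already rules out PI by Bergman-type results, but in any event if $R$ were PI then being a prime PI ring it would be a Goldie ring with a simple artinian ring of quotients, incompatible with being radical and non-nil. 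Being generated in degrees $\le 1$ is preserved by graded quotients. Thus $R$ has all the required properties over any countable field of characteristic zero.

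It remains to remove the characteristic-zero hypothesis. Given a countable field $\K$ of characteristic $p>0$, one repeats the construction of Theorem~\ref{thm:main} working over $\K$ directly; the combinatorial core of that construction (the next three sections of the paper) does not use that the characteristic is zero in any essential way, only that $\K$ is countable, so one obtains the analogous $P$ over $\K$, and the argument above goes through verbatim. Alternatively, and more cleanly, one works over the prime field: for $\Q$ one has the construction above, and for $\mathbb F_p$ one inspects that Theorem~\ref{thm:main}'s proof is characteristic-free, then extends scalars $R \otimes_{\mathbb F_p} \K$, noting that tensoring with a field extension preserves being graded, generated in degree one, GK-dimension two, and radical (the last because $J$ of a graded algebra is the graded Jacobson radical, which base-changes well for radical graded algebras with no idempotents), and preserves primeness and non-PI-ness when $\K/\mathbb F_p$ is separable or after a further reduction. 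The main obstacle is precisely this last point: verifying that the delicate algebra $P$ of Theorem~\ref{thm:main} can be produced, or transported, over a field of positive characteristic while keeping GK-dimension exactly two and the radical-but-not-nil behaviour of its matrix ring intact. I expect this to reduce to checking that no step in the forthcoming construction secretly inverts an integer, which a careful reading confirms; everything else in the corollary is bookkeeping with Proposition~\ref{prop:nonnil} and Lemma~\ref{lem:prime}.
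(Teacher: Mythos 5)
Your overall architecture follows the paper's: take $P$ from Theorem~\ref{thm:main}, form $R_0=M_n(P)$ via Proposition~\ref{prop:nonnil}, then pass to a graded prime non-nil quotient $R$ via Lemma~\ref{lem:prime}. However, several of your verifications are not actually proofs. The digression establishing that $R_0$ is not PI is both unnecessary (the paper proves non-PI only for the final quotient $R$, which is all that the corollary asks) and invalid as written: phrases like ``too tame to have non-radical polynomial extension'' and ``nonzero idempotent-free quotient obstruction'' are not arguments. For the lower bound on the GK-dimension of $R$, you merely assert that a radical graded algebra of GK-dimension $<2$ is nil; the paper supplies the missing step, namely that Bergman's gap theorem forces GK-dimension $\le1$, then by Small--Warfield~\cite{small-w:dimension1} such an affine algebra is module-finite over its centre, so its Jacobson radical is nilpotent. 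For non-PI of $R$, the Goldie/Posner route you sketch does not, as written, reach a contradiction: knowing that $R$ is an order in a simple artinian ring does not immediately rule out $R$ being radical and non-nil, and you leave this unargued. The paper's route is shorter and complete: by the Razmyslov--Kemer--Braun theorem~\cite{belov-r:pi} the radical of an affine PI algebra is nilpotent, which contradicts $R=J(R)$ not nil.

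You also correctly flag a real mismatch that the paper itself passes over in silence: Theorem~\ref{thm:main} is stated for countable fields of \emph{characteristic zero}, while the corollary (and the abstract) claim all countable fields, and the paper's proof invokes Theorem~\ref{thm:main} without comment. But your proposed fixes do not close this gap. ``Check that no step inverts an integer'' is a plan, not a proof; and your base-change alternative from $\mathbb F_p$ founders exactly where you admit it does, since radicality and GK-dimension exactly two are not obviously preserved under scalar extension. The correct observation is that the characteristic hypothesis in Theorem~\ref{thm:main} is in fact superfluous --- the determinant computations in Lemmas~\ref{lem:case2} and~\ref{lem:extend} reduce to $t_{i1}^2(t_{j1}t_{i2}-t_{i1}t_{j2})\ne0$, which holds in any characteristic --- but this needs to be said and checked, not hoped for.
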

\begin{proof}
  Let $P$ be as in Theorem~\ref{thm:main}. By
  Proposition~\ref{prop:nonnil}, the ring $R_0=M_n(P)$ is radical and
  non-nil for $n$ large enough. By Lemma~\ref{lem:prime}, some
  quotient $R$ of $R_0$ is radical and prime. Because $P$ is radical,
  its ring of matrices $R_0$ is also radical, and so is its quotient
  $R$. Because $P$ has GK-dimension $\le2$, so do $R_0$ and $R$. If
  $R$ has GK-dimension $<2$, it would have dimension $\le1$ by
  Bergman's gap theorem~\cite{bergman:growth}, so would be finitely
  generated as a module over its centre by~\cite{small-w:dimension1},
  so $R$'s radical would be nilpotent, a contradiction; therefore, $R$
  has GK-dimension exactly $2$.

  Since $P$ is generated by $2$ elements of degree $1$, the rings
  $R_0$ and $R$ are generated by finitely many elements of degree $1$
  (the elementary matrices).

  Finally, $R$ is not PI; indeed, by the Razmyslov-Kemer-Braun
  theorem~\cite{belov-r:pi}*{\S2.5}, if $R$ were PI then its radical
  would be nilpotent.
\end{proof}

\section{Notation and previous results}
Our notation closely matches that of~\cite{smoktunowicz:jralgebras2}.
In what follows, $\K$ is a countable field and $A$ is the free
associative $\K$-algebra in three non-commuting indeterminates
$x,y,z$. The set of monomials in $\{x,y\}$ is denoted by $M$ and, for
$n\geq 0$, the set of monomials of degree $n$ is denoted by $M(n)$. In
particular, $M(0)=\{1\}$ and for $n\geq1$ the elements in $M(n)$ are
of the form $x_1\cdots x_n$ with $x_i\in \{x, y\}$. The
\emph{augmentation ideal} of $A$, consisting of polynomials without
constant term, is denoted by $\bar A$.

The $\K$-subspace of $A$ spanned by $M(n)$ is denoted by $A(n)$, and
elements of $A(n)$ are called {\em homogenous polynomials of degree
  $n$}. More generally, if $S$ is a subset of $A$, then its
homogeneous part $S(n)$ is defined as $S\cap A(n)$.

The {\em degree}, $\deg f$, of $f \in A$, is the least $d \geq 0$ such
that $f \in A(0) + \cdots + A(d)$. Any $f\in A$ can be uniquely
written in the form $f=f_0+f_1+\cdots+f_d$, with $f_i\in A(i)$. The
elements $f_i$ are the {\em homogeneous components} of $f$.  A (right,
left, two-sided) ideal $\mathscr I$ of $A$ is {\em homogeneous} if,
for every $f\in\mathscr I$, all its homogeneous components belong to
$\mathscr I$.

\begin{lemma}[\cite{smoktunowicz:jralgebras2}*{Lemma~6}]\label{lem:6}
  Let $\K$ be a countable field, and let $\bar A$ be as above.
  Then there exists a subset $Z\subset\{5,6,\dots\}$, and an
  enumeration $\{f_i\}_{i\in Z}$ of $\bar A$, such that
  \[i>3^{2deg(f_i) +2}(\deg(f_i)+1)^2\text{ for all }i\in Z.\]
\end{lemma}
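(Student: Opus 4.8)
The plan is to lean on the one hypothesis that does any work, namely that $\K$ is countable. This makes $\bar A$ a countably infinite set: on the one hand $\bar A$ contains the pairwise distinct powers $x,x^2,x^3,\dots$, and on the other $A$ is the increasing union of the finite-dimensional $\K$-vector spaces $A(0)+\cdots+A(d)$, each of which is countable, so $A\supseteq\bar A$ is countable. I would then fix an arbitrary bijection $\N\to\bar A$, listing its values as $g_1,g_2,g_3,\dots$ with no attention paid to their degrees, and put $T(d):=3^{2d+2}(d+1)^2$, which is finite for every $d\ge 0$. The goal is to relabel the list $(g_j)$ so that no polynomial ends up with an index that is small compared with its degree.

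I would carry out the relabelling greedily. Processing $j=1,2,3,\dots$ in order, and supposing distinct indices $i_1,\dots,i_{j-1}$ already chosen, let $i_j$ be the least member of $\{5,6,\dots\}\setminus\{i_1,\dots,i_{j-1}\}$ strictly greater than $T(\deg g_j)$, set $f_{i_j}:=g_j$, and at the end put $Z:=\{i_j\colon j\in\N\}$. This procedure never gets stuck, since at step $j$ only finitely many indices are forbidden whereas the set of $i\in\{5,6,\dots\}$ with $i>T(\deg g_j)$ is infinite. The map $j\mapsto i_j$ is injective by construction, so $i\mapsto f_i$ is a bijection from $Z\subseteq\{5,6,\dots\}$ onto $\{g_j\colon j\in\N\}=\bar A$; in other words $\{f_i\}_{i\in Z}$ is an enumeration of $\bar A$. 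Finally, for any $i=i_j\in Z$ the defining property of $i_j$ yields $i>T(\deg g_j)=T(\deg f_i)=3^{2\deg(f_i)+2}(\deg(f_i)+1)^2$, which is exactly the claimed inequality. (An equivalent route is to list the homogeneous layers $\bar A(1),\bar A(2),\dots$ separately and dovetail them into the slots lying above the corresponding thresholds; the greedy version above simply does this automatically.)

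I do not expect any real obstacle here: the lemma is purely a bookkeeping device. In particular the precise shape of the bound $3^{2\deg(f_i)+2}(\deg(f_i)+1)^2$ is immaterial to the argument — any function of the degree that is finite on each degree could be slotted in, and this particular choice is merely the estimate the later sections will need in order to dominate the number of monomials of bounded degree and related combinatorial quantities. The single indispensable point is that $\K$ is countable, which is what makes $\bar A$ — equivalently each layer $\bar A(n)$ — countable; over an uncountable field no such enumeration of $\bar A$ indexed by a subset of $\N$ can exist.
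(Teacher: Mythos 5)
The paper does not prove Lemma~\ref{lem:6} itself; it imports it verbatim from \cite{smoktunowicz:jralgebras2}. Your argument is the natural and correct one: countability of $\K$ gives countability of $\bar A$, and a greedy relabelling (always picking the smallest unused index $\ge 5$ that clears the threshold $3^{2\deg g_j+2}(\deg g_j+1)^2$) produces the required enumeration, since at each step only finitely many indices are forbidden while infinitely many admissible ones remain. This is almost certainly the same bookkeeping argument as in the cited source, and you are right that the exact form of the bound is irrelevant to this lemma and only matters for the combinatorial estimates downstream.
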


\noindent Define the sequence $e(i)=2^{2^{2^{2^i}}}$, and set
\[S=\bigcup_{i\ge5}\{e(i)-i-1,e(i)-i,\dots,e(i)-1\}.\]
\begin{lemma}[\cite{smoktunowicz:jralgebras2}*{Theorem~9}]\label{lem:9}
  Let $Z$ and $\{f_i\}_{i\in Z}$ be as in Lemma~\ref{lem:6}. Fix
  $m\in Z$, and set $w_m=2^{e(m)+2}$. Then there is a two-sided
  ideal $\mathscr P_m\le \bar A$ such that
  \begin{itemize}
  \item the ideal $\mathscr P_m$ is generated by homogeneous elements of
    degrees larger than $10w_m$;
  \item there exists $g_m\in \bar A$ such that
    $f_m-g_m+f_mg_m\in \mathscr P_m$;
  \item there is a linear $\K$-space $F_m\subseteq A(2^{e(m)})$ such
    that $\mathscr P_m\subseteq \sum_{k=0}^{\infty}A(w_mk)F_mA$ and
    $dim_\K(F_m)<m$.
    \end{itemize}
\end{lemma}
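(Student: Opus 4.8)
**Plan for proving Lemma~\ref{lem:9} (cited as [smoktunowicz:jralgebras2]*Theorem~9).**

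The plan is to construct the ideal $\mathscr P_m$ together with the data $g_m$ and $F_m$ by a careful inductive procedure that "kills" the quasi-regularity obstruction for $f_m$ while keeping all newly introduced relations in very high degree and tightly controlled in a single bounded-dimensional layer $F_m\subseteq A(2^{e(m)})$. Since $f_m\in\bar A$ is a fixed polynomial of degree $d=\deg f_m$ and $i=m$ satisfies the crucial inequality $m>3^{2d+2}(d+1)^2$ from Lemma~\ref{lem:6}, I have an enormous amount of room: the target degree $2^{e(m)}$ (and $w_m=2^{e(m)+2}$) dwarfs $d$. First I would fix $N=2^{e(m)}$ and $w_m=2^{e(m)+2}=4N$, and look for $g_m$ among polynomials whose homogeneous components live in degrees that, modulo $\mathscr P_m$, can be forced to vanish. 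The idea is that $f_m-g_m+f_mg_m\in\mathscr P_m$ should be read as: the image of $f_m$ in $\bar A/\mathscr P_m$ is quasi-regular, with quasi-inverse the image of $g_m$.

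The key steps, in order, are: (1) Expand the formal quasi-inverse $g_m=-f_m+f_m^2-f_m^3+\cdots$; this is an infinite sum, but we only need it to be well-defined modulo $\mathscr P_m$, so the task reduces to choosing $\mathscr P_m$ so that only finitely many "heads" of this series survive and the tail lies in $\mathscr P_m$. (2) Identify a homogeneous degree just below $N$ where the partial sums of the series start to be controllable; set up a linear-algebra system whose unknowns are the coefficients of the relations we wish to impose in degree $N$, i.e.\ a spanning set for the candidate $F_m$. (3) Count dimensions: the number of monomials of degree $\le d$ that can appear as "obstruction data" is at most $(\dim A(0)+\cdots+\dim A(d)) = \sum_{k=0}^d 3^k < 3^{d+1}$ in the three-letter alphabet (or the relevant subalphabet), and squaring/convolving these against the grading contributes at most a factor polynomial in $d$; the inequality $m>3^{2d+2}(d+1)^2$ is exactly what guarantees $\dim_\K(F_m)<m$ after this count. (4) Define $\mathscr P_m=\sum_{k\ge0}A(w_mk)F_mA$ as a right-module-like sum over the "period" $w_m$, check it is a two-sided ideal (left multiplication by $A(w_mk)$ shifts the period, and because $F_m$ sits in degree $N=w_m/4$ and the generators are taken of degree $>10w_m$ after possibly replacing $F_m$ by $A(\text{small})F_m$ or passing to a shift, two-sidedness follows from the self-similar period structure), and verify the generator-degree bound $>10w_m$ — here one takes the generating set to be $A(j)F_m$ for $j$ in a suitable range pushing the total degree above $10w_m$, which is consistent since $N$ itself is astronomically larger than $10w_m=40N$… so in fact one rescales: the honest choice is $N$ chosen so that a generating set of $\mathscr P_m$ lands in degrees $>10w_m$, using that the convolution structure only forces relations in degrees that are multiples of $w_m$ shifted by $N$.

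The main obstacle — and the technical heart of the argument — is step (2)–(3): showing that one can simultaneously (a) make the tail of the quasi-inverse series collapse into $\mathscr P_m$, (b) keep the surviving relation-space $F_m$ inside a single homogeneous layer $A(2^{e(m)})$, and (c) bound $\dim_\K F_m$ by $m$. These pull in opposite directions: collapsing more of the series requires more relations, which inflates $F_m$. The resolution exploits the extreme gap between $d$ and $e(m)$ together with the very sparse, doubly-exponential structure of $S$ and of $e(\cdot)$: because consecutive values $e(i)$ are so far apart, the relations needed to neutralise $f_m$ interact with essentially no other layer, so $F_m$ can be localised, and the dimension count is governed solely by $d=\deg f_m$, for which Lemma~\ref{lem:6}'s inequality $m>3^{2d+2}(d+1)^2$ was tailor-made. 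I would therefore devote the bulk of the write-up to a careful bookkeeping lemma that tracks, degree by degree in the range $[N-d, N]$, exactly which coefficients must be annihilated, and then a clean dimension estimate closing the bound $\dim_\K(F_m)<m$; the ideal-theoretic verifications in step (4) are then routine given the period-$w_m$ structure.
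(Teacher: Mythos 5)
The lemma you are attempting is not proved in this paper at all: it is cited verbatim as Theorem~9 of \cite{smoktunowicz:jralgebras2}, so there is no in-paper argument to compare your plan against. Evaluating the plan on its own terms, the overall direction is sound --- work with the formal quasi-inverse $g_m=-f_m+f_m^2-\cdots$, truncate it, choose $\mathscr P_m$ to absorb the tail, and exploit the gap between $\deg f_m$ and $e(m)$ together with $m>3^{2\deg f_m+2}(\deg f_m+1)^2$ to bound $\dim_\K F_m$. That is indeed the flavour of Smoktunowicz's construction.

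But step~(4) as written contains a flat numerical error: you assert the generator-degree bound $>10w_m$ is ``consistent since $N$ itself is astronomically larger than $10w_m=40N$.'' With $N=2^{e(m)}$ and $w_m=2^{e(m)+2}=4N$ this is backwards --- $10w_m=40N$ is forty times \emph{larger} than $N$. You notice this (``so in fact one rescales'') but there is nothing to rescale: the lemma pins $F_m$ inside the single layer $A(2^{e(m)})$ and fixes $w_m=2^{e(m)+2}$. What actually reconciles the two conditions is that the containment $\mathscr P_m\subseteq\sum_k A(w_mk)F_mA$ is only an upper bound on where $\mathscr P_m$ may sit; since the generators lie above degree $10w_m$, the ideal has no nonzero component in degrees between $N$ and $10w_m$ at all. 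Your plan treats the containment as if it forced degree-$N$ elements into $\mathscr P_m$ and then tries to patch the resulting contradiction, which is the wrong reading of the statement.

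Beyond that, the heart of the lemma --- constructing $F_m$ explicitly inside $A(2^{e(m)})$, proving two-sidedness of $\mathscr P_m$ from the period-$w_m$ structure, and actually closing the bound $\dim_\K F_m<m$ --- is deferred to ``a careful bookkeeping lemma'' and ``count dimensions.'' The inequality from Lemma~\ref{lem:6} is declared to be ``exactly what guarantees'' the bound, but no count is carried out; the claim that the convolution contributes ``at most a factor polynomial in $d$'' is precisely the non-trivial content one would have to justify. As it stands this is a plausible sketch of the right ingredients, not a proof, and the numerical slip in step~(4) would have to be repaired before the rest could be assembled.
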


\begin{lemma}[\cite{smoktunowicz:jralgebras2}*{Theorem~10}]\label{lem:10}
  Let $Z$ and $F_m$ be as in Lemma~\ref{lem:9}.  There are
  $\K$-linear subspaces $U(2^n)$ and $V(2^n)$ of $A(2^n)$ such
  that, for all $n\in\N$,
  \begin{enumerate}
  \item $\dim_\K V({2^n})=2$ if $n\notin S$;
  \item $\dim_\K V(2^{e(i)-i-1+j})=2^{2^j}$, for all $i\ge5$ and all
    $j\in\{1,\dots,i-1\}$;
  \item $V(2^n)$ is spanned by monomials;
  \item $F_i\subseteq U(2^{e(i)})$ for every $i\in Z$;
  \item $V(2^n)\oplus U(2^n)=A(2^n)$;
  \item $A(2^n)U(2^n)+U(2^n)A(2^n)\subseteq U(2^{n+1})$;
  \item $V(2^{n+1})\subseteq V(2^n)V(2^n)$;
  \item if $n\notin S$ then there are monomials $m_1, m_2\in
    V(2^n)$ such that $V(2^n)=\K m_1+\K m_2$ and
    $m_2A(2^n)\subseteq U(2^{n+1})$.
  \end{enumerate}
\end{lemma}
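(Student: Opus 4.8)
The plan is to build the spaces $V(2^n)$ and $U(2^n)$ by induction on $n$, carrying along the extra datum of a distinguished basis monomial of $V(2^n)$ whenever $n\notin S$. The observation that makes this work is that, under the canonical identification $A(2^{n+1})=A(2^n)\otimes_\K A(2^n)$ given by concatenation of monomials, any direct sum decomposition $A(2^n)=V\oplus U$ satisfies
\[A(2^n)U+UA(2^n)=(V\otimes U)\oplus(U\otimes V)\oplus(U\otimes U),\]
whose complement in $A(2^{n+1})$ is precisely $V\otimes V=V(2^n)V(2^n)$. Hence conditions (5)--(7) amount to the single requirement that $V(2^{n+1})$ be a monomial subspace of $V(2^n)V(2^n)$ and $U(2^{n+1})$ a complement of it containing the forced part $A(2^n)U(2^n)+U(2^n)A(2^n)$; subject to this, the choice of $V(2^{n+1})$ inside $V(2^n)V(2^n)$ is completely free.

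I would begin with $V(1)=A(1)=\K x+\K y$, $U(1)=0$, distinguished monomial $y$. For the step $n\to n+1$ there are, once one checks (using the fast growth of $e$) that the blocks comprising $S$ are pairwise far apart, exactly three cases. \textbf{(a)} If $n\notin S$, so $V(2^n)=\K m_1+\K m_2$ with $m_2$ distinguished, set $V(2^{n+1})=\K m_1m_1+\K m_1m_2$, with distinguished monomial $m_1m_2$, and $U(2^{n+1})=A(2^n)U(2^n)+U(2^n)A(2^n)+m_2A(2^n)$; the tensor computation above shows these are complementary, and the extra summand $m_2A(2^n)$ is exactly what makes condition (8) hold at level $n$. \textbf{(b)} If $n,n+1\in S$, say $n=e(i)-i-1+j$ with $0\le j\le i-1$, set $V(2^{n+1})=V(2^n)V(2^n)$ and $U(2^{n+1})=A(2^n)U(2^n)+U(2^n)A(2^n)$; this squares $\dim V$ at each step, running $2,2^{2^1},2^{2^2},\dots,2^{2^i}$ along the block and realizing condition (2). \textbf{(c)} If $n=e(i)-1\in S$ but $n+1=e(i)\notin S$, write $T=A(2^{e(i)-1})U(2^{e(i)-1})+U(2^{e(i)-1})A(2^{e(i)-1})$, so that $A(2^{e(i)})=T\oplus V(2^{e(i)-1})V(2^{e(i)-1})$, and let $\overline F_i$ be the image of $F_i$ under the projection onto $V(2^{e(i)-1})V(2^{e(i)-1})$ along $T$; then $\dim\overline F_i\le\dim F_i<i$. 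Since $\dim\bigl(V(2^{e(i)-1})V(2^{e(i)-1})\bigr)=2^{2^{i+1}}$ vastly exceeds $i$, a greedy choice yields two distinct monomials $\mu_1,\mu_2$ of the monomial basis of $V(2^{e(i)-1})V(2^{e(i)-1})$ with $(\K\mu_1+\K\mu_2)\cap\overline F_i=0$; put $V(2^{e(i)})=\K\mu_1+\K\mu_2$, distinguished monomial $\mu_2$, extend $\overline F_i$ to a complement $W$ of $V(2^{e(i)})$ inside $V(2^{e(i)-1})V(2^{e(i)-1})$, and set $U(2^{e(i)})=T\oplus W$. Then $F_i\subseteq T+\overline F_i\subseteq U(2^{e(i)})$, while the remaining conditions are immediate.

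It then remains to verify all eight conclusions: (3) holds since every $V(2^n)$ above is a span of monomials; (5), (6), (7) and the dimension counts (1), (2) are checked case by case; (4) is the content of case (c); and (8), for a given $n\notin S$, is settled at the step $n\to n+1$, which is necessarily of type (a). The step I expect to be the main obstacle is case (c): one must make room for $F_i$ inside $U(2^{e(i)})$ without violating the forced inclusion $A(2^n)U(2^n)+U(2^n)A(2^n)\subseteq U(2^{n+1})$ and while forcing $\dim V(2^{e(i)})$ back down to $2$. This is exactly why the blocks of $S$ are spaced by the tower $e(i)=2^{2^{2^{2^i}}}$ and why $V$ is inflated along each block: it guarantees $\dim\bigl(V(2^{e(i)-1})V(2^{e(i)-1})\bigr)=2^{2^{i+1}}>\dim F_i$, so the square of the large space $V(2^{e(i)-1})$ has enough monomials to absorb $F_i$ in the complement of a two-dimensional $V(2^{e(i)})$.
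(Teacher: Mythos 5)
Your sketch reconstructs the same three‑case induction that the paper describes (in the proof of Proposition \ref{prop:main}) when adapting the cited Theorem 10 of \cite{smoktunowicz:jralgebras2}: case (a) matches the paper's ``$n\notin S$'' step with $U(2^{n+1})=T(2^{n+1})+m_2V(2^n)$ (your $m_2A(2^n)$ gives the same space since $m_2U(2^n)\subseteq T(2^{n+1})$), case (b) matches ``$n,n+1\in S$'', and case (c) matches ``$n\in S$, $n+1\notin S$'' with the projection of $F_i$ onto $V(2^n)V(2^n)$ along $T(2^{n+1})$ and a choice of two monomials independent of that image. The proposal is correct and takes essentially the same route as the source.
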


\section{New results}
Consider the polynomial ring $A[X]$ in an indeterminate $X$. Consider
the elements $(x+Xy)^n$. Write
\[w(n,i)=\sum_{\substack{m\in M(n)\\\deg_{y}m=n-i, \deg_{x}m=i}}m,
\]
and observe that $(x+Xy)^{2^n}=\sum_{i=0}^{2^n}w(2^n,2^n-i)X^i$.  Let
$W(n)$ denote the linear span of all $w(n,i)$ with
$i\in\{0,\dots,n\}$.

We extend the results of the previous section by imposing additional
conditions on the $U(n)$ and $V(n)$ constructed in
Lemma~\ref{lem:10}. Throughout this section, we use the notation
\[T(2^{n+1}) = A(2^n)U(2^n) + U(2^n)A(2^n).\]
\begin{proposition}\label{prop:main}
  There exist subspaces $U(2^n),V(2^n)\subseteq A(2^n)$ satisfying
  all assumptions from Lemma~\ref{lem:10}, with the additional
  property that
  \begin{enumerate}
  \item[(9)] for all $n\in\N$, if $i\in\N$ be such that
    $\{n,n-1,\dots,n-i\}\subset S$, then
    \[\dim_\K(W(2^n)+U(2^n))\ge\dim_\K U(2^n)+2+i;\]
  \item[(10)] $z\in U(2^0)=U(1)$.
  \end{enumerate}
\end{proposition}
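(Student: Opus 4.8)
The plan is to construct the spaces $U(2^n),V(2^n)$ by a refinement of the inductive construction underlying Lemma~\ref{lem:10}, carrying along the two extra bookkeeping properties (9) and (10) as we go. Property (10) costs essentially nothing: since $z$ does not appear in any monomial in $\{x,y\}$ and in particular $z\notin A(1)$ as a monomial in $x,y$ — rather $z$ spans a third basis direction of $A(1)$ — we simply demand at the base step $n=0$ that $U(1)$ contain $z$, i.e.\ split $A(1)=\K x\oplus\K y\oplus\K z$ as $V(1)\oplus U(1)$ with $V(1)=\K x+\K y$ (consistent with $\dim_\K V(1)=2$, as $0\notin S$) and $z\in U(1)$; then conditions (5), (6) propagate $z$ into every $U(2^n)$ via $U(2^{n+1})\supseteq A(2^n)U(2^n)$.

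The substance is property (9). The key observation is that $W(2^n)$ is exactly the span of the homogeneous components $w(2^n,2^n-i)$ of $(x+Xy)^{2^n}$, and that, because $(x+Xy)^{2^{n+1}}=\big((x+Xy)^{2^n}\big)^2$, one has the multiplicative relation $W(2^{n+1})\subseteq W(2^n)W(2^n)$ — indeed each $w(2^{n+1},k)$ is a sum of products $w(2^n,a)w(2^n,b)$ with $a+b=k$. Thus $W(2^n)$ behaves under squaring in the same way that $V(2^n)$ does via condition (7), and in fact $W(2^n)$ is spanned by monomials and $W(2^1)=W(2)=\K x^2+\K(xy+yx)+\K y^2$, so $\dim_\K W(2^n)$ grows. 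The plan is to run the construction of Lemma~\ref{lem:10} so that, at each level $n$, instead of merely choosing $U(2^n)$ to be \emph{a} complement of $V(2^n)$ compatible with (4),(5),(6),(8), we choose it so as \emph{also} to contain a prescribed large subspace of $W(2^n)$ whenever a long run $\{n,n-1,\dots,n-i\}$ lies in $S$. During such a run $\dim_\K V(2^n)$ is large (it is $2^{2^j}$ by condition (2) of Lemma~\ref{lem:10}, which is forced inside the blocks making up $S$), so there is ``room'' inside $A(2^n)$: the codimension of $U(2^n)$ is small ($=\dim V(2^n)$, but note this is small \emph{relative to} $\dim A(2^n)=3^{2^n}$), hence $U(2^n)$, being of large dimension, will generically contain most of $W(2^n)$. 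The precise inductive hypothesis I would carry is a quantitative one: that $\dim_\K\big(W(2^n)\cap U(2^n)\big)$, equivalently $\dim_\K(W(2^n)+U(2^n))-\dim_\K U(2^n) = \dim_\K W(2^n)-\dim_\K(W(2^n)\cap U(2^n))$, is \emph{small} — at most $2$ outside $S$-runs, and decreasing by the right amount as the run lengthens — and then (9) is the contrapositive accounting: $\dim_\K(W(2^n)+U(2^n)) = \dim_\K U(2^n)+\dim_\K W(2^n)-\dim_\K(W(2^n)\cap U(2^n))$, so we need $\dim_\K W(2^n)-\dim_\K(W(2^n)\cap U(2^n))\ge 2+i$, i.e.\ at least $2+i$ of the directions of $W(2^n)$ stay outside $U(2^n)$, i.e.\ land in the $V$-part.

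So concretely: at the start of each maximal block of $S$ (at $n=e(m)-m-1$ for $m\ge5$), $W(2^n)$ already has dimension $\ge n+1$ (it is $\dim_\K W(2^{n})$, which is the number of distinct $w(2^n,\cdot)$, namely $2^n+1$, though for the bound we only need it to exceed the run length), and we arrange that $W(2^n)\cap U(2^n)$ picks up only a bounded-codimension piece, leaving $\ge 2+(\text{run length so far})$ directions of $W$ in $V$; compatibility with (6) ($A(2^n)U(2^n)+U(2^n)A(2^n)\subseteq U(2^{n+1})$) is maintained because the part of $W$ we keep in $V$ is chosen among the $w(2^n,\cdot)$ that, when multiplied into $A$, land in $T(2^{n+1})\subseteq U(2^{n+1})$ — here condition (8) of Lemma~\ref{lem:10}, giving a monomial $m_2$ with $m_2A(2^n)\subseteq U(2^{n+1})$, is the mechanism that lets us ``absorb'' the unwanted products. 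The main obstacle, and where care is needed, is precisely this simultaneous bookkeeping: we must verify that the extra directions of $W(2^n)$ that we insist on keeping in the ``visible'' part $V(2^n)$ (to make (9) hold) do not conflict with conditions (6), (7), (8) of Lemma~\ref{lem:10}, i.e.\ that they still multiply correctly into the next level and that $V(2^{n+1})\subseteq V(2^n)V(2^n)$ is not violated — in short, that there genuinely is enough room, at every level of a long $S$-run, to reconcile the growth forced on $V$ by condition (2) with the growth of $W$ forced by (9), while keeping everything monomial and multiplicatively nested. I expect this to reduce to a dimension count inside $A(2^n)$ comparing $\dim_\K V(2^n)=2^{2^j}$, the run length $i\le j$, and $\dim_\K W(2^n)\ge 2^n+1$, which is comfortably dominant; the bulk of the write-up will be threading these choices through the induction of Lemma~\ref{lem:10} rather than any single hard estimate.
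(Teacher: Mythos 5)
Your treatment of (10) matches the paper's (set $U(1)\ni z$, $V(1)=\K x+\K y$, and let conditions (5),(6) carry $z$ up). Your observation that $W(2^{n+1})\subseteq W(2^n)W(2^n)$ — because $(x+Xy)^{2^{n+1}}=((x+Xy)^{2^n})^2$, so each $w(2^{n+1},k)=\sum_{a+b=k}w(2^n,a)w(2^n,b)$ — is also correct and is indeed the germ of the paper's argument. But the rest of the proposal is a plan, not a proof, and the part you wave away is precisely where all the work lies.

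The central gap is that you never establish how the ``escaping dimension'' $\dim_\K(W(2^n)+U(2^n))-\dim_\K U(2^n)$ actually grows (or is even preserved) from level $n$ to level $n+1$. Your proposed mechanism — ``$U(2^n)$, being of large dimension, will generically contain most of $W(2^n)$'' and a dimension count $\dim_\K W(2^n)=2^n+1$ versus $\dim_\K V(2^n)=2^{2^j}$ — is not valid, because $U(2^n)$ is not generic: it is forced to contain $T(2^n)=A(2^{n-1})U(2^{n-1})+U(2^{n-1})A(2^{n-1})$ by condition (6), and $T(2^n)$ could a priori swallow every $w(2^n,k)$ — the products $w(2^{n-1},a)w(2^{n-1},b)$ with one factor in $U(2^{n-1})$ all land in $T(2^n)$, and these are most of the summands of $w(2^n,k)$. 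Whether something survives depends on a careful combinatorial accounting of which pairs $(a,b)$ have \emph{both} factors outside $U(2^{n-1})$. The paper isolates exactly this in its two key lemmas: one shows $w(2^{n+1},2i)\equiv w(2^n,i)^2$ and $w(2^{n+1},i+j)\equiv w(2^n,i)w(2^n,j)+w(2^n,j)w(2^n,i)$ modulo $T(2^{n+1})$ (plus lower terms) for $i<j$ minimal escapees, and then deduces, first, that in Case 2 ($n\notin S$) one can choose the monomial $m_h$ added to $U(2^{n+1})$ so that two escapees survive, and second, that when $n,n+1\in S$ the number of escapees modulo $T$ strictly increases. You don't sketch anything of this sort, so the inductive step is simply absent.

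A second omission: you never address condition (4) of Lemma~\ref{lem:10}, $F_i\subseteq U(2^{e(i)})$, which is imposed at the level $n+1=e(i)\notin S$ closing an $S$-block. Adding $F_i$ to $U(2^{e(i)})$ could destroy the escapees accumulated over the block; the paper survives this precisely because the block length is $i+1$, so by that point there are at least $i+2$ escapees, while $\dim_\K F_i<i$, leaving room to absorb $F_i$ and still keep two $w$'s outside $U$. This quantitative interplay between the block length and $\dim F_i$ is the reason property (9) is phrased with the additive $+i$, and the reason Case~3 is the delicate one. Without it, your ``room'' heuristic gives you nothing, because (4) is exactly where the construction is \emph{told} to put prescribed vectors into $U$. (A small side note: the clause asserting $\dim_\K(W(2^n)\cap U(2^n))$ is ``small'' is backwards — that intersection is very large; it is the \emph{co}dimension of $W\cap U$ inside $W$ that must stay $\ge 2+i$.)
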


\begin{lemma}\label{lem:case2}
  If $\dim_\K(W(2^n)+U(2^n))\ge\dim_\K U(2^n)+2$ and $m_1,m_2\in V(2^n)$
  are linearly independent, then there exists $h\in\{1,2\}$ such that
  \[\dim_\K(W(2^{n+1})+T(2^{n+1})+m_hV(2^n))\ge\dim(T(2^{n+1})+m_hV(2^n))+2.\]
\end{lemma}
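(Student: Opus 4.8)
The plan is to reduce the statement, via the relations of Lemma~\ref{lem:10}, to a combinatorial fact about the coefficients of $(x+Xy)^{2^{n+1}}=((x+Xy)^{2^n})^2$, and then to exploit the freedom of choosing $h$ from a two-element set.

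First I would pass to the quotient modulo $T(2^{n+1})$. By Lemma~\ref{lem:10}(5) and~(6) one has $A(2^{n+1})=A(2^n)A(2^n)=T(2^{n+1})+V(2^n)V(2^n)$, and multiplication induces a well-defined bilinear map $\beta\colon V(2^n)\times V(2^n)\to A(2^{n+1})/T(2^{n+1})$ with spanning image, since replacing a factor by a representative alters the product only by an element of $A(2^n)U(2^n)+U(2^n)A(2^n)=T(2^{n+1})$. Write $v_i=w(2^n,2^n-i)$ and let $\bar v_i$ denote the class of $v_i$ in $V(2^n)\cong A(2^n)/U(2^n)$. Squaring $(x+Xy)^{2^n}=\sum_iv_iX^i$ gives $w(2^{n+1},2^{n+1}-j)=\sum_{i+i'=j}v_iv_{i'}$, hence
\[
w(2^{n+1},2^{n+1}-j)\equiv\sum_{i+i'=j}\beta(\bar v_i,\bar v_{i'})\pmod{T(2^{n+1})}.
\]
Since $W(2^n)$ is spanned by the $v_i$, the hypothesis says exactly that $V_0:=\operatorname{span}_\K\{\bar v_i\}$ has $\dim_\K V_0\ge 2$. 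It therefore suffices to produce $h\in\{1,2\}$ for which the elements $\sum_{i+i'=j}\beta(\bar v_i,\bar v_{i'})$, $0\le j\le 2^{n+1}$, span at least two dimensions modulo the image of $m_hV(2^n)$.

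Next I would encode the $\bar v_i$ by the generating function $F(S)=\sum_i\bar v_iS^i\in V(2^n)[S]$, whose coefficient span is $V_0$; the vectors above are the coefficients of the product of $F(S)$ with itself under $\beta$. Two remarks organize the estimate. First, since $\K[S]$ is a domain, multiplication by a fixed nonzero polynomial is injective and hence preserves linear independence of the scalar components of a vector-valued polynomial; this transfers information about the coefficient span of $F(S)$ to that of its square. Second, in $\sum_{i+i'=j}$ every unordered pair $\{i,i'\}$ occurs in both orders, so the components of these vectors along $m_1$ and $m_2$ involve $\beta(m_1,m_2)$ and $\beta(m_2,m_1)$ only through the symmetric combination $\beta(m_1,m_2)+\beta(m_2,m_1)$. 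Using these remarks with $\dim_\K V_0\ge 2$, one shows the span of the $\sum_{i+i'=j}\beta(\bar v_i,\bar v_{i'})$ is already at least two-dimensional in $A(2^{n+1})/T(2^{n+1})$; when $n\in S$, Lemma~\ref{lem:10}(2) makes $V(2^n)$ at least four-dimensional, leaving ample room for a two-dimensional image to survive modulo $m_hV(2^n)$, so the delicate case is $n\notin S$, when $\dim_\K V(2^n)=2$.

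In that case one uses the distinguished monomial basis $m_1',m_2'$ of $V(2^n)$ supplied by Lemma~\ref{lem:10}(8), for which $m_2'A(2^n)\subseteq U(2^{n+1})$, and hence $m_2'm_1',m_2'm_2'\in U(2^{n+1})$. Writing $m_1,m_2$ in terms of $m_1',m_2'$ and reducing the products $m_hm_1',m_hm_2'$ modulo $T(2^{n+1})$, so that every contribution with $m_2'$ on the left disappears, a short linear-algebra argument selects an $h$ for which the quotient $A(2^{n+1})/(T(2^{n+1})+m_hV(2^n))$ has dimension at least two and receives a surjection from $W(2^{n+1})$; the surjectivity comes from the fact that the polynomial $\sum_ia_iS^i$ recording the $m_1'$-coordinates of the $\bar v_i$ is nonzero (again because $\dim_\K V_0\ge 2$) together with the domain remark above. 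I expect this last step --- keeping exact track of how $m_1,m_2$ sit relative to the distinguished basis, so as to make the correct choice of $h$ and certify that the quotient does not collapse below dimension two --- to be the main obstacle; everything preceding it is bookkeeping with Lemma~\ref{lem:10} and with polynomials over $\K$.
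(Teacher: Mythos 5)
The reduction to the bilinear map $\beta\colon V(2^n)\times V(2^n)\to A(2^{n+1})/T(2^{n+1})$ and the observation that the convolution coefficients of $(x+Xy)^{2^{n+1}}$ are what must be tracked are both sound and indeed match the computation underlying the paper's proof. However, there are two genuine gaps.

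First, the appeal to Lemma~\ref{lem:10}(8) is circular in the only context in which Lemma~\ref{lem:case2} is ever applied, namely Case~2 of the proof of Proposition~\ref{prop:main}. Part~(8) asserts that $m_2A(2^n)\subseteq U(2^{n+1})$, a property of the \emph{not-yet-defined} space $U(2^{n+1})$; the whole point of Lemma~\ref{lem:case2} is to supply the index $h$ that is then used to \emph{define} $U(2^{n+1})=T(2^{n+1})+m_hV(2^n)$. You cannot assume, while choosing $h$, that the eventual $U(2^{n+1})$ already swallows $m_2'A(2^n)$. Relatedly, you slide between ``modulo $T(2^{n+1})$'' and ``contributions with $m_2'$ on the left disappear,'' but $m_2'm_1'$ and $m_2'm_2'$ lie in $V(2^n)V(2^n)$, which is transverse to $T(2^{n+1})$; they vanish only modulo $U(2^{n+1})$, which is off limits for the reason above.

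Second, the dismissal of the case $n\in S$ (``ample room'') does not establish anything: having a two-dimensional image of $W(2^{n+1})$ in $A(2^{n+1})/T(2^{n+1})$ does not prevent that image from lying entirely inside the further subspace generated by $m_hV(2^n)$. Nothing in the sketch rules that out, for either choice of $h$. More fundamentally, the $n\in S$/$n\notin S$ dichotomy is a detour: the paper's argument is uniform and uses no such case split. It chooses minimal $i<j$ with $w(2^n,i)\notin U(2^n)$ and $w(2^n,j)\notin U(2^n)+\K w(2^n,i)$, writes $w(2^n,i)\equiv t_{i1}m_1+t_{i2}m_2$, $w(2^n,j)\equiv t_{j1}m_1+t_{j2}m_2$ modulo $U(2^n)$, and uses the two-element freedom in $h$ solely to ensure $t_{i1}\neq0$ (i.e.\ $w(2^n,i)$ is not a scalar multiple of $m_h$ modulo $U(2^n)$); then a direct two-by-two determinant computation on the coefficients of $w(2^{n+1},2i)$ and $w(2^{n+1},i+j)$ modulo $T(2^{n+1})+m_hV(2^n)$ closes the proof. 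The key idea your sketch is missing is this explicit and economical choice of the \emph{two} witnesses $w(2^{n+1},2i)$ and $w(2^{n+1},i+j)$ and the minimality of $i,j$, which is what makes the reductions~\eqref{eq:2i} and~\eqref{eq:i+j} go through cleanly without invoking anything about level $n+1$.
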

\begin{proof}
  Let $i\ge0$ be minimal such that $w(2^n,i)$ does not belong to
  $U(2^n)$, and let $j>i$ be minimal such that $w(2^n,j)$ does not
  belong to $U(2^n)+\K w(2^n,i)$. By the inductive assumption such
  elements can be found.  By permuting $m_1$ and $m_2$ if necessary,
  we may assume that $w(2^n,i)$ is not a multiple of $m_2$, and we
  choose $h=2$. We have
  \[w(2^{n+1},2i)=\sum_{k=-i}^iw(2^n,i+k)w(2^n,i-k),\]
  and either
  \begin{itemize}
  \item[1.] $k=0$,
  \item[or 2.] $k<0$, in which case $w(2^n,i+k)\in U(2^n)$,
  \item[or 3.] $k>0$, in which case $w(2^n,i-k)\in U(2^n)$.
  \end{itemize}
  \noindent Consequently, we get
  \begin{equation}\label{eq:2i}
    w(2^{n+1},2i)\equiv w(2^n,i)w(2^n,i)\mod T(2^{n+1}).
  \end{equation}
  Consider now
  \[w(2^{n+1},i+j)=\sum_{k=-i}^jw(2^n,i+k)w(2^n,j-k);
  \]
  then either
  \begin{itemize}
  \item[1.] $k<0$, in which case $w(2^n,i+k)\in U(2^n)$,
  \item[or 2.] $0<k<j-i$, in which case $w(2^n,i+k)\in U(2^n)+\K
    w(2^n,i)$ and $w(2^n,j-k)\in U(2^n)+\K w(2^n,i)$,
  \item[or 3.] $k=0$ or $k=j-i$,
  \item[or 4.] $k>j-i$, in which case $w(2^n,j-k)\in U(2^n)$.
  \end{itemize}
  \noindent Consequently, we get
  \begin{multline}\label{eq:i+j}
    w(2^{n+1},i+j)\equiv w(2^n,i)w(2^n,j)+w(2^n,j)w(2^n,i)\\\mod T(2^{n+1})+\K w(2^n,i)w(2^n,i).
  \end{multline}

  Recall now that we have
  \[w(2^n,i)\equiv t_{i1}m_1+t_{i2}m_2\mod U(2^n),\qquad
  w(2^n,j)\equiv t_{j1}m_1+t_{j2}m_2\mod U(2^n)
  \]
  for some $t_{i1}, t_{i2}, t_{j1}, t_{j2}\in\K$. Furthermore,
  $t_{i1}\neq 0$, and the vectors $(t_{i1}, t_{i2})$ and $(t_{j1},
  t_{j2})$ are linearly independent over $\K$. Write $Q=T(2^{n+1})+m_2
  V(2^n)$, so that $Q$ contains $m_2m_2$ and $m_2m_1$.

  It follows from~\eqref{eq:2i} that $w(2^{n+1},2i)\equiv
  t_{i1}^2m_1m_1+t_{i1}t_{i2}m_1m_2\mod Q$; and, because
  $t_{i1}\neq0$, we have $w(2^{n+1},2i)\notin Q$.

  Similarly, from~\eqref{eq:i+j} we get $w(2^{n+1},i+j)\equiv
  2t_{i1}t_{j1}m_1m_1+(t_{j1}t_{i2}+t_{i1}t_{j2})m_1m_2\mod Q+\K
  w(2^{n+1},2i)$; and, because the vectors $(t_{i1}, t_{i2})$ and
  $(t_{j1}, t_{j2})$ are linearly independent, so are
  $(2t_{i1}t_{j1},t_{j1}t_{i2}+t_{i1}t_{j2})$ and $(t_{i1}^2,
  t_{i1}t_{i2}) =t_{i1}(t_{i1},t_{i2})$, so we have
  $w(2^{n+1},i+j)\notin Q+\K w(2^{n+1},2i)$.

  We then get $\dim_\K(W(2^{n+1})+Q)\ge\dim_\K Q+2$ as required.
\end{proof}

\begin{lemma}\label{lem:extend}
  $\dim_\K(W(2^{n+1})+T(2^{n+1}))\ge\dim_\K(W(2^n)+T(2^n))+1$.
\end{lemma}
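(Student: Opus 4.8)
The plan is to observe that $T(2^{n+1})$ already fills up almost all of $A(2^{n+1})$, so that $\dim_\K T(2^{n+1})$ alone far exceeds $\dim_\K A(2^n)$, and in particular exceeds $\dim_\K\bigl(W(2^n)+T(2^n)\bigr)$ by a wide margin.

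First I would identify the quotient $A(2^{n+1})/T(2^{n+1})$. Since $A$ is a free algebra, hence a domain, and each monomial of degree $2^{n+1}$ factors uniquely as a product of two monomials of degree $2^n$, the multiplication map $A(2^n)\otimes_\K A(2^n)\to A(2^{n+1})$ is a linear isomorphism; in particular $\dim_\K A(2^{n+1})=(\dim_\K A(2^n))^2$. Under this isomorphism $A(2^n)U(2^n)$ and $U(2^n)A(2^n)$ correspond to $A(2^n)\otimes U(2^n)$ and $U(2^n)\otimes A(2^n)$, whence
\[A(2^{n+1})/T(2^{n+1})\cong\bigl(A(2^n)/U(2^n)\bigr)\otimes_\K\bigl(A(2^n)/U(2^n)\bigr),\]
a space of dimension $(\dim_\K V(2^n))^2$ by property~(5) of Lemma~\ref{lem:10}; so $\dim_\K T(2^{n+1})=(\dim_\K A(2^n))^2-(\dim_\K V(2^n))^2$. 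Next I would record that $U(2^n)\ne0$ for every $n$: indeed $z\in U(1)$ by property~(10) of Proposition~\ref{prop:main}, and the inductive step follows from property~(6) of Lemma~\ref{lem:10} since $A$ has no zero divisors. Therefore $\dim_\K V(2^n)\le\dim_\K A(2^n)-1$, and
\[\dim_\K\bigl(W(2^{n+1})+T(2^{n+1})\bigr)\ge\dim_\K T(2^{n+1})\ge(\dim_\K A(2^n))^2-(\dim_\K A(2^n)-1)^2=2\dim_\K A(2^n)-1.\]
On the other hand $W(2^n)+T(2^n)\subseteq A(2^n)$ gives $\dim_\K\bigl(W(2^n)+T(2^n)\bigr)\le\dim_\K A(2^n)$, so the two sides differ by at least $\dim_\K A(2^n)-1\ge1$, which is the claim.

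The only real content is the tensor-square identification of $A(2^{n+1})/T(2^{n+1})$; granting it, the proof is a one-line dimension count and the resulting bound is in fact far from tight, so I do not anticipate any genuine obstacle. If a sharper estimate were ever needed one could argue as in Lemma~\ref{lem:case2}: pick $p_1<\dots<p_r$ greedily so that $w(2^n,p_1),\dots,w(2^n,p_r)$ span the image of $W(2^n)$ in $A(2^n)/U(2^n)\cong V(2^n)$, and verify that the images of $w(2^{n+1},p_a+p_b)=\sum_{p+q=p_a+p_b}w(2^n,p)w(2^n,q)$ in $A(2^{n+1})/T(2^{n+1})\cong V(2^n)^{\otimes 2}$ have, for a suitable ordering of the products $w(2^n,p_a)w(2^n,p_b)$, distinct and linearly independent leading terms; this yields the stronger $\dim_\K\bigl(W(2^{n+1})+T(2^{n+1})\bigr)\ge\dim_\K T(2^{n+1})+2r-1$. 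But that refinement is not needed here.
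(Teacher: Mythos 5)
Your proof does establish the inequality as literally written, but only by exploiting the fact that $T(2^{n+1})$ alone is enormously larger than $A(2^n)$ — the argument never touches $W$ at all, and would equally well prove the same inequality with $W$ replaced by $\{0\}$. That is a strong signal that the statement, as printed, is a typo for a codimension inequality, namely
\[\dim_\K\bigl((W(2^{n+1})+T(2^{n+1}))/T(2^{n+1})\bigr)\ \ge\ \dim_\K\bigl((W(2^n)+T(2^n))/T(2^n)\bigr)+1,\]
and indeed this is exactly what the paper's proof establishes: starting from $j$ elements $w(2^n,k_1),\dots,w(2^n,k_j)$ that are independent \emph{modulo} $T(2^n)$, it produces $j+1$ elements $w(2^{n+1},2k_j)$ and $w(2^{n+1},k_1+k_m)$ that are independent \emph{modulo} $T(2^{n+1})$. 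It is also the only version that can be used where the lemma is invoked — in Case~1 of the proof of Proposition~\ref{prop:main} to deduce Condition~(9), and in Case~3 to pass from $\dim_\K(W(2^n)+T(2^n))\ge\dim_\K T(2^n)+i+1$ to $\dim_\K(W(2^{n+1})+T(2^{n+1}))\ge\dim_\K T(2^{n+1})+i+2$, which is an inequality relative to $T(2^{n+1})$ and cannot follow from the absolute version.

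So the gap is not a wrong computation — the tensor identification $A(2^{n+1})/T(2^{n+1})\cong V(2^n)\otimes V(2^n)$ and the numerology are fine — but a wrong target: you proved a statement that is vacuously true for size reasons and carries no information about $W$. Ironically, your final aside (choose $w(2^n,p_1),\dots,w(2^n,p_r)$ spanning the image of $W(2^n)$ in $V(2^n)$ and exhibit, via leading terms of $w(2^{n+1},p_a+p_b)$ in $V(2^n)^{\otimes 2}$, enough independent elements modulo $T(2^{n+1})$) is essentially the paper's proof, which you then set aside as ``not needed here.'' It \emph{is} needed: that sketch, carried out carefully with the lexicographically-minimal choice of indices as in the paper, is the actual content of the lemma.
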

\begin{proof}
  Let there be $k_1,k_2,\ldots,k_j\in\N$ such that
  \[w(2^n,k_1),w(2^n,k_2),\ldots,w(2^n,k_j)
  \]
  are linearly independent modulo $T(2^n)$. We may assume that the
  sequence $(k_1,\ldots,k_j)$ is minimal with this property in the
  lexicographical ordering.  We claim that the elements
  $w(2^{n+1},2k_j)$ and $w(2^{n+1},k_1+k_m)$ for $1\leq m\leq j$ are
  linearly independent modulo $T(2^{n+1})$.  There are $j+1$ such
  elements, as required. As in~\eqref{eq:2i} we observe
  \[w(2^{n+1},2k_1)\equiv w(2^n,k_1)w(2^n,k_1)\mod T(2^{n+1}),
  \]
  and similarly, for each $m\in\{1,\dots,j\}$ we have
  \begin{multline*}
    w(2^{n+1},k_1+k_m)\equiv w(2^n,k_1)w(2^n,k_m)+
    w(2^n,k_m)w(2^n,k_1)\\\mod T(2^{n+1})+\sum_{\substack{1\le p<m\\1\le q<m}}\K
    w(2^n,k_{p})w(2^n,k_{q}).
  \end{multline*}
  Therefore, $w(2^{n+1},k_1+k_m)$ contains the summand
  $w(2^n,k_1)w(2^n,k_m)+ w(2^n,k_m)w(2^n,k_1)$ which no
  $w(2^{n+1},k_1+k_{p})$ with $p<m$ contains.

  \noindent Finally,
  \begin{multline*}
    w(2^{n+1},2k_j)\equiv w(2^n,k_j)w(2^n,k_j)\\\mod
    T(2^{n+1})+\sum_{p=1}^{j-1}w(2^n,k_p)A(2^n)+A(2^n)w(2^n,k_p),
  \end{multline*}
  so $w(2^{n+1},2k_j)$ contains the summand $w(2^n,k_j)w(2^{n+1},k_j)$
  which none of the previous elements contains. It follows that the
  $j+1$ elements we exhibited are linearly independent modulo
  $T(2^{n+1})$.
\end{proof}

\begin{proof}[Proof of Proposition~\ref{prop:main}]
  We adapt the proof of~\cite{smoktunowicz:jralgebras2}*{Theorem~10}
  to show how the additional assumptions may be satisfied. In fact,
  (10) is already part of the construction.

  Recall that the proof
  of~\cite{smoktunowicz:jralgebras2}*{Theorem~10} constructs sets
  $U(2^{n+1})$ and $V(2^{n+1})$ by induction. The following cases are
  considered:
  \begin{itemize}
  \item[1.] $n\in S$ and $n+1\in S$.
  \item[2.] $n\notin S$.
  \item[3.] $n\in S$ and $n+1\notin S$.
  \end{itemize}

  We modify cases~2 and~3, while not changing case~1, which we repeat
  for convenience of the reader:

  \noindent\textbf{Case 1: \boldmath $n\in S$ and $n+1\in S$.} Define
  $U(2^{n+1}) = T(2^{n+1})$ and
  $V(2^{n+1})=V(2^n)V(2^n)$. Conditions~(6,7) certainly hold. If, by
  induction, Conditions~(3,5) hold for $U(2^n)$ and $V(2^n)$, they
  hold for $U(2^{n+1})$ and $V(2^{n+1})$ as well. Moreover, $\dim_\K
  V(2^n)=(\dim_\K V(2^n))^2$, inductively satisfying
  Condition~(2). Finally, Condition~(9) follows directly from
  Lemma~\ref{lem:extend}.

  \noindent\textbf{Case 2: \boldmath $n\notin S$.} We begin as in the
  original argument: $\dim_\K V(2^n)=2$, and is generated by
  monomials, by the inductive hypothesis. Let $m_1, m_2$ be the
  distinct monomials that generate $V(2^n)$. Then $V(2^n)V(2^n)=\K m_1
  m_1 + \K m_1 m_2+\K m_2 m_1 + \K m_2 m_2$. By Lemma~\ref{lem:case2},
  there exists $h\in\{1,2\}$ such that
  \[\dim_\K(W(2^{n+1})+T(2^{n+1})+m_hV(2^n))\ge\dim(T(2^{n+1})+m_hV(2^n))+2.\]
  Permuting $m_1$ and $m_2$ if necessary, we assume $h=2$, and set
  \[U(2^{n+1}) = T(2^{n+1}) + m_2 V(2^n),\qquad V(2^{n+1})=\K m_1
  m_1+\K m_1 m_2.
  \]
  It is clear that Conditions~(1,3,6,7,9) hold, and Condition~(5)
  follows from
  \begin{eqnarray*}
    \lefteqn{A(2^{n+1}) = A(2^n)A(2^n)}\\
    &=&U(2^n)U(2^n) \oplus U(2^n)V(2^n) \oplus V(2^n)U(2^n)
    \oplus m_1 V(2^n) \oplus m_2 V(2^n)\\
    &=&U(2^{n+1}) \oplus V(2^{n+1}).
  \end{eqnarray*}

  \noindent\textbf{Case 3: \boldmath $n\in S$ and $n+1\notin S$.} We
  begin as in the original argument: we have $n=e(i)-1$ for some
  $i>0$.  By the inductive hypothesis, we have
  $\dim_\K(W(2^n)+T(2^n))\ge\dim_\K T(2^n)+i+1$. One more application of
  Lemma~\ref{lem:extend} gives
  \[\dim_\K(W(2^{n+1})+T(2^{n+1}))\ge\dim_\K T(2^{n+1})+i+2.\]

  So as to treat simultaneously the cases $i\in Z$ and $i\not\in Z$,
  we extend Condition~(4) to all $i\in\N$ by taking $F_i=0$ and $s=0$
  if $i\not\in Z$.

  We know that $F_i$ has a basis $\{f_1, \dots ,f_s\}$ for some
  $f_1,\dots , f_s\in A(2^{e(i)}))$ and $s<i$. Write each $f_j$ as
  $f_j=\bar f_j+g_j$ for $\bar f_j \in V(2^n)V(2^n)$ and $g_j\in
  T(2^{n+1})$. Since $V(2^n)V(2^n)\cap T(2^{n+1})=0$, this
  decomposition is unique.

  Since $s<i$, there are elements $w_1, w_2\in W(2^{e(i)})$ such that
  \[(\K w_1+\K w_2)\cap (T(2^{n+1})+\K\bar f_1+\ldots +\K\bar f_s)=0.
  \]
  Let $P$ be a a linear $\K$-subspace of $V(2^n)V(2^n)$ maximal with
  the properties that $(\K w_1+\K w_2)\cap(P+T(2^{n+1}))=0$ and $\bar
  f_j\in P$ for all $j\in\{1,\dots,s\}$.

  Observe that $P$ has codimension $2$ in $V(2^n)V(2^n)$. Since the
  monomials in $V(2^n)V(2^n)$ form a basis, there are two such
  monomials, say $m_1$ and $m_2$, that are linearly independent modulo
  $P$. Define then
  \[V(2^{n+1})=\K m_1+\K m_2,\qquad U(2^{n+1}) = T(2^{n+1}) + P.
  \]
  Conditions~(5,6) are immediately satisfied.  Since each polynomial
  $f_j = g_j + \bar f_j$ belongs to $U(2^{n+1})$, Condition~(4) is
  satisfied as well.

  To end the proof, observe now that $\{w_1,w_2\}$ are linearly
  independent modulo $U(2^{n+1})$, so $\dim_\K(\K w_1+\K
  w_2+U(2^{n+1}))=\dim_\K U(2^{n+1})+2$; this proves~(9).
\end{proof}

\section{Proof of Theorem~\ref{thm:main}}
We present $P$ as a quotient $\bar A/\mathscr E$ for a suitable
ideal $\mathscr E$; we
follow~\cite{smoktunowicz:jralgebras2}*{page~844}. First, $\mathscr E$
is a graded ideal: $\mathscr E=\mathscr E(1)+\mathscr E(2)+\cdots$, so
it suffices to define $\mathscr E(n)$ for all $n\in\N$. By definition,
$\mathscr E(n)$ is the maximal subset of $A(n)$ such that, if $m\in\N$
be such that $2^{m}\leq n<2^{m+1}$, then
\begin{equation}\label{eq:e}
  A(j)\mathscr E(n)A(2^{m+2}-j-n)\subseteq U(2^{m+1})A(2^{m+1})+
  A(2^{m+1})U(2^{m+1})
\end{equation}
for all $j\in\{0,\dots, 2^{m+2}-n\}$; or, more briefly, $(A\mathscr
EA)(2^m)\subseteq T(2^m)$ for all $m\in\N$.

\begin{theorem}\label{thm:polynotrad}
  The subset $\mathscr E$ is an ideal in $\bar A$. Moreover,
  $P:=\bar A/\mathscr E$ is radical, has Gelfand-Kirillov
  dimension two, is $\N$-graded and generated by two degree-$1$
  elements, and $P[X]$ is not radical.
\end{theorem}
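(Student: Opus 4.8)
The plan is to verify each asserted property of $P=\bar A/\mathscr E$ in turn, where $\mathscr E$ is the graded ideal defined by~\eqref{eq:e}.

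First I would check that $\mathscr E$ is a two-sided ideal of $\bar A$. It is homogeneous by construction, so it suffices to show $A(j)\mathscr E(n)\subseteq\mathscr E$ and $\mathscr E(n)A(j)\subseteq\mathscr E$ for each $n$; this is a formal consequence of the maximality in the definition of $\mathscr E(n)$, together with the compatibility condition $A(2^n)U(2^n)+U(2^n)A(2^n)\subseteq U(2^{n+1})$ from Lemma~\ref{lem:10}(6), exactly as in~\cite{smoktunowicz:jralgebras2}*{page~844}. This step is routine book-keeping about the containment $(A\mathscr EA)(2^m)\subseteq T(2^m)$.

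Next I would establish the positive properties, following~\cite{smoktunowicz:jralgebras2} essentially verbatim. The grading and generation by the two degree-$1$ images of $x,y$ are immediate from the homogeneity of $\mathscr E$. For the GK-dimension, I would use Lemma~\ref{lem:10}: since $\mathscr E(2^n)$ contains $U(2^n)$ once one passes far enough, and $\dim_\K V(2^n)$ is $2$ outside the sparse set $S$ while the blocks inside $S$ are short (lengths $i-1$ starting at $e(i)-i-1$, and $e$ grows doubly-exponentially), the dimension of $(\bar A/\mathscr E)(2^n)$ grows like $V(2^n)$, and summing a geometric-type estimate gives GK-dimension $2$. For radicality, I would again invoke the argument of~\cite{smoktunowicz:jralgebras2}: the elements $f_m$ enumerate $\bar A$, and Lemma~\ref{lem:9} provides $g_m$ with $f_m-g_m+f_mg_m\in\mathscr P_m$; one checks $\mathscr P_m\subseteq\mathscr E$ using the third bullet of Lemma~\ref{lem:9} ($\mathscr P_m\subseteq\sum_k A(w_mk)F_mA$) together with $F_i\subseteq U(2^{e(i)})$ from Lemma~\ref{lem:10}(4) and the absorption property~(6), so that every element of $P$ is quasi-regular, i.e.\ $P$ is radical.

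The genuinely new point, and the main obstacle, is showing $P[X]$ is \emph{not} radical; this is where Proposition~\ref{prop:main} enters. I would argue that the element $z+Xy\in P[X]$ — or rather a suitable such element built from $x,y$ and $X$, with $z$ replaced by its image, using Condition~(10) that $z\in U(1)$ — is not quasi-regular in $P[X]$. Suppose it were, with quasi-inverse $g\in P[X]$; comparing coefficients of powers of $X$ and lifting to $\bar A$, one finds that the relevant powers $(x+Xy)^{2^n}$ have components $w(2^n,i)$, and quasi-regularity would force these to collapse modulo $\mathscr E$, hence modulo $T(2^n)$, contradicting Condition~(9): for $n$ with a long run $\{n,n-1,\dots,n-i\}\subset S$ we have $\dim_\K(W(2^n)+U(2^n))\ge\dim_\K U(2^n)+2+i$, and this surplus, growing with $i$ along the doubly-exponential scale, is incompatible with the degree bounds a quasi-inverse $g$ of bounded $X$-degree would have to satisfy. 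Making this contradiction precise — tracking how the $W(2^n)$-directions survive in $P$ and why a quasi-inverse cannot absorb them — is the crux, and it is exactly what Lemmas~\ref{lem:case2} and~\ref{lem:extend} were designed to feed into. The remaining assertions of Theorem~\ref{thm:main} (primeness, non-nil, non-PI) are then not proved here but deduced in Corollary~\ref{cor:main} via Proposition~\ref{prop:nonnil} and Lemma~\ref{lem:prime}.
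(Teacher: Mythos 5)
Your proposal follows the paper closely on the ideal property, GK-dimension, grading, two generators, and radicality of $P$ (all of which the paper cites from Smoktunowicz), but the part that the theorem actually adds --- that $P[X]$ is not radical --- is both tied to the wrong element and missing its key idea.

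The element $z+Xy$ cannot serve as a witness: Condition~(10) puts $z\in U(1)=\mathscr E(1)$, so $z=0$ in $P$ and $z+Xy=Xy$ in $P[X]$; since $P$ is $\N$-graded and radical, the image of $y$ is a homogeneous quasi-regular element, hence nilpotent, so $Xy$ is nilpotent and shows nothing. Condition~(10) is there only so that $P$ is generated by the two elements $x,y$ (it kills $z$); it plays no role in non-radicality, and the correct witness is $x+Xy$, which your later formulas implicitly use. More seriously, your contradiction sketch --- a hypothetical quasi-inverse $g$ of bounded $X$-degree clashing with the dimension surplus from Condition~(9) --- is not carried out, and you yourself flag it as ``the crux''. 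The paper sidesteps all of it with a regrading observation: declare $X$ to have degree $0$, so that $P[X]$ inherits the $\N$-grading of $P$ and $x+Xy$ becomes \emph{homogeneous} of degree $1$. In a graded ring a homogeneous element is quasi-regular if and only if it is nilpotent (the formal quasi-inverse $-(x+Xy)+(x+Xy)^2-\cdots$ is a genuine element only if it terminates). So it suffices to show $x+Xy$ is not nilpotent, and that is exactly what Proposition~\ref{prop:main} supplies: Condition~(9) gives $W(2^n)\not\subseteq U(2^n)$, hence $(x+Xy)^{2^n}\notin U(2^n)\otimes\K[X]$, and \eqref{eq:e} then yields $(x+Xy)^{2^{n-2}}\notin\mathscr E[X]$ for all $n\ge 2$. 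Without the regrading, your argument never crosses from non-nilpotency to non-quasi-regularity; and working with the $X$-degree is the wrong grading, since $x+Xy$ is not $X$-homogeneous.
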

\begin{proof}
  By~\cite{smoktunowicz:jralgebras2}*{Theorem~20}, the GK-dimension of
  $P$ is at most $2$; it is in fact exactly $2$, by Bergman's gap
  theorem~\cite{bergman:growth}. Also, $P$ is radical
  by~\cite{smoktunowicz:jralgebras2}*{Theorem~24}. Moreover, $z\in
  U(1)=\mathscr E(1)$, so $P$ is generated by the images of $x$ and
  $y$ in $\bar A/\mathscr E$.

  Recall that $X$ is a free indeterminate commuting with $x$ and
  $y$. Consider $n\ge2$. By Proposition~\ref{prop:main}, not all
  $w(2^n,i)$ belong to $U(2^n)$, so $(x+Xy)^{2^n}\not\in U(2^n)\otimes
  \K[X]$, so $(x+Xy)^{2^{n-2}}\not\in\mathscr E[X]$ by~\eqref{eq:e},
  so $(x+Xy)^{2^{n-2}}\neq0$ in $P[X]$. Since $n$ may be taken
  arbitrarily large, it follows that $x+Xy$ is not nilpotent.

  If $X$ be now declared to have degree $0$, then $P[X]$ is a graded
  ring, and $x+Xy$ is homogeneous and not nilpotent. However, in a
  graded ring, a homogeneous element belongs to the Jacobson radical
  if and only if it is nilpotent; it therefore follows that $P[X]$ is
  not radical.
\end{proof}

\section{Final remarks}
The methods employed here depend crucially on the hypothesis that $\K$
is countable. We don't know if it there are finitely generated radical
algebras of Gelfand-Kirillov dimension two over an uncountable
field. By Amitsur's theorem, such algebras must be nil.

The argument in Theorem~\ref{thm:polynotrad} requires us, in
particular, to construct a ring $P$ such that $P[X]$ is not graded
nil. We do not know if $P$ is nil; if so, this would be an improvement
over~\cite{smoktunowicz:polynomialnil}, in which Smoktunowicz
constructs a nil ring $R$ such that $R[X]$ is not nil.

We note that, over any countable field, nil algebras of
Gelfand-Kirillov dimension at most three were constructed by Lenagan,
Smoktunowicz and Young~\cites{lenagan-s:nillie,lenagan-s-y:nil}.

It remains an open problem whether there exist affine self-similar
algebras satisfying the conditions of Corollary~\ref{cor:main}.

We are also unable to construct an algebra of quadratic growth (i.e.\
whose growth function is bounded by a polynomial of degree two). The
algebras $R$ constructed here do admit an upper bound on their growth
of the form $\dim_\K(R(1)+\cdots+R(n))\le Cn^2\log(n)^3$,
see~\cite{smoktunowicz:jralgebras2}*{Theorem~20}.

We finally refer to Zelmanov's survey~\cite{zelmanov:openpbalgebras}
for a wealth of similar problems.

\begin{bibdiv}
\begin{biblist}
\bibselect{math}
\end{biblist}
\end{bibdiv}

\end{document}